\newtheorem{theor}{Theorem}
\newtheorem{lemma}[theor]{Lemma}
\newenvironment{proof}{\noindent{\scshape Proof.}}{\hspace{2mm} $\square$}
\newcommand{\N}{\mathbb{N}}
\newcommand{\Z}{\mathbb{Z}}
\newcommand{\R}{\mathbb{R}}
\newcommand{\ep}{\epsilon}
\newcommand{\ind}{\mathbf 1}
\DeclareMathOperator{\card}{card \,}
\DeclareMathOperator{\sign}{sign \,}
\begin{document}

\begin{frontmatter}

\title     {Galam's bottom-up hierarchical system and \\ public debate model revisited}
\runtitle  {Galam's bottom-up hierarchical system and public debate model revisited}
\author    {N. Lanchier\thanks{Research supported in part by NSF Grant DMS-10-05282.} and N. Taylor}
\runauthor {N. Lanchier and N. Taylor}
\address   {School of Mathematical and Statistical Sciences, \\ Arizona State University, \\ Tempe, AZ 85287, USA.}

\begin{abstract} \ \
 This article is concerned with the bottom-up hierarchical system and public debate model proposed by Galam, as well as a spatial
 version of the public debate model.
 In all three models, there is a population of individuals who are characterized by one of two competing opinions, say opinion~$-1$
 and opinion~$+1$.
 This population is further divided into groups of common size~$s$.
 In the bottom-up hierarchical system, each group elects a representative candidate, whereas in the other two models, all the
 members of each group discuss at random times until they reach a consensus.
 At each election/discussion, the winning opinion is chosen according to Galam's majority rule:
 the opinion with the majority of representants wins when there is a strict majority while one opinion, say opinion~$-1$, is chosen
 by default in case of a tie.
 For the public debate models, we also consider the following natural updating rule that we shall call proportional rule:
 the winning opinion is chosen at random with a probability equal to the fraction of its supporters in the group.
 The three models differ in term of their population structure:
 in the bottom-up hierarchical system, individuals are located on a finite regular tree, in the non-spatial public debate model, they
 are located on a complete graph, and in the spatial public debate model, they are located on the~$d$-dimensional regular lattice.
 For the bottom-up hierarchical system and non-spatial public debate model, Galam studied the probability that a given opinion wins
 under the majority rule and assuming that individuals' opinions are initially independent, making the initial number of supporters
 of a given opinion a binomial random variable.
 The first objective of this paper is to revisit his result assuming that the initial number of individuals in favor of a given
 opinion is a fixed deterministic number.
 Our analysis reveals phase transitions that are sharper under our assumption than under Galam's assumption, particularly
 with small population size.
 The second objective is to determine whether both opinions can coexist at equilibrium for the spatial public debate model under the
 proportional rule, which depends on the spatial dimension.
\end{abstract}

\begin{keyword}[class=AMS]
\kwd[Primary ]{60K35}
\end{keyword}

\begin{keyword}
\kwd{Voting systems, public debates, interacting particle system, martingale.}
\end{keyword}

\end{frontmatter}


\section{Introduction}
\label{sec:intro}

\indent Galam's bottom-up hierarchical system and public debate model \cite{galam_2008} are used to understand voting behaviors of
 two competing opinions in democratic societies.
 In his models, Galam assumes that initially individuals in the population are independently in favor of one opinion with a fixed
 probability, making the initial number of that type of opinion a binomial random variable.
 This analysis revisits Galam's models by assuming that the initial number of individuals in favor of an opinion is a fixed
 deterministic number, which is more realistic when analyzing small populations.
 This paper is also concerned with a spatial version of Galam's public debate model introduced in \cite{lanchier_neufer_2013}.
 Before stating our results, we start with a detailed description of these three models. \vspace*{5pt}


\noindent {\bf Bottom-up hierarchical system} --
 The bottom-up hierarchical system \cite{galam_2008} is a stochastic process that depends on two parameters:
 the group size~$s$ and the number of voting steps~$N$, which are both positive integers.
 The structure of this model, which is displayed in Figure \ref{fig:voting-model}, begins with a population of~$s^N$ individuals in
 favor of either opinion~$+1$ or opinion~$-1$ on the bottom level.
 This population is further divided into groups of size $s$ and local majority rules determine a representative candidate of each
 group who then ascends to another group at the next lowest level.
 This process continues until a single winner at level 0 is elected.
 When the group size $s$ is odd, majority rule is well defined, whereas when the group size $s$ is even, a bias is introduced
 favoring a predetermined type, say opinion~$-1$, if there is a tie.
 That is, the representative candidates are determined at each step according to the majority rule whenever there is a strict
 majority but is chosen to be the one in favor of opinion~$-1$ in case of a tie.
 This assumption is justified by Galam \cite{galam_2008} based on the principle of social inertia.
 More formally, one can think of the model as a rooted regular tree with degree $s$ and $N$ levels plus the root.
 Denote by
 $$ X_n (i) \quad \hbox{for} \quad n = 0, 1, \ldots, N \quad \hbox{and} \quad i = 1, 2, \ldots, s^n $$
 the opinion of the $i$th node/individual at level~$n$.
 Then, the opinion of each node is determined from the configuration of opinions $X_N$ at the bottom level and the recursive rule:
 $$ \begin{array}{l} X_n (i) \ := \ \sign (\sum_{j = 1}^s X_{n + 1} (s (i - 1) + j) - 1/2) \quad \hbox{for all} \quad i = 1, 2, \ldots, s^n. \end{array} $$
 Note in particular that this recursive rule is deterministic making the process stochastic only through its configuration
 at the bottom level.
 Galam \cite{galam_2008} assumes that nodes at the bottom level are independently in favor of a given opinion with a fixed probability.
 In contrast, we will assume that the configuration at the bottom level is a random permutation with a fixed number of nodes in favor
 of a given opinion. \vspace*{5pt}

\begin{figure}[t!]
\centering
 \includegraphics[width = 360pt]{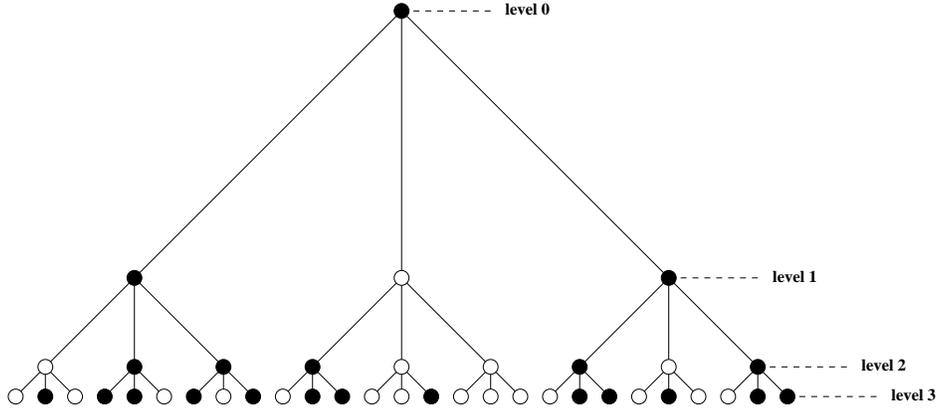}
\caption{\upshape Schematic illustration of the bottom-up hierarchical system with $s = 3$ and $N = 3$.
 Black dots represent individuals supporting opinion~$+1$ and white dots individuals supporting opinion~$-1$.}
\label{fig:voting-model}
\end{figure}


\noindent {\bf Non-spatial public debate model} --
 The second model under consideration in this paper is Galam's public debate model that examines the dynamics of opinion shifts.
 This process again depends on the same two parameters but now evolves in time.
 There is a population of~$N$ individuals each with either opinion~$+1$ or opinion~$-1$.
 At each time step, a random group of size~$s$, called discussion group, is chosen from the population, which results in all the
 individuals in the group adopting the same opinion.
 The updating rule considered in \cite{galam_2008} is again the majority rule:
 if there are opposing opinions in the discussion group, then the opinion with the majority of supporters dominates the other
 opinion causing the individuals who initially supported the minority opinion to change their opinion to the majority opinion.
 As previously, when the group size~$s$ is even, ties may occur, in which case a bias is introduced in favor of opinion~$-1$.
 We refer to Figure \ref{fig:debate-model} for a schematic representation of this process.
 In this paper, we will also consider another natural updating rule that we shall call proportional rule, which assumed that
 all the individuals in the group adopt opinion~$\pm 1$ with a probability equal to the fraction of supporters of this opinion
 in the group before discussion.
 To define these processes more formally, we now let
 $$ X_n (i) \quad \hbox{for} \quad n \in \N \quad \hbox{and} \quad i = 1, 2, \ldots, N $$
 be the opinion of individual $i$ at time $n$.
 In both processes, a set of $s$ individuals, say $B_s$, is chosen uniformly from the population at each time step.
 Under the majority rule, we set
 $$ \begin{array}{l} X_n (i) \ := \ \sign (\sum_{j \in B_s} X_{n - 1} (j) - 1/2) \quad \hbox{for all} \quad i \in B_s \end{array} $$
 while under the proportional rule, we set
 $$ \begin{array}{rclccl}
     X_n (i) & := & + 1 & \hbox{for all} \ i \in B_s & \hbox{with probability} & s^{-1} \ \sum_{j \in B_s} \ind \{X_n (j) = + 1 \} \vspace*{4pt} \\
             & := & - 1 & \hbox{for all} \ i \in B_s & \hbox{with probability} & s^{-1} \ \sum_{j \in B_s} \ind \{X_n (j) = - 1 \}. \end{array} $$
 In both processes, individuals outside $B_s$ are not affected by the discussion and the evolution rule is iterated until everyone
 in the population has the same opinion.
 We will see later that the process that keeps track of the number of individuals with opinion~$+1$ rather than the actual
 configuration is itself a discrete-time Markov chain.
 As for the bottom-up hierarchical system, we will assume that the configuration at time 0 has a fixed number of individuals in
 favor of a given opinion whereas Galam studied the (majority rule) public debate model under the assumption that initially
 individuals are independently in favor of a given opinion with a fixed probability. \vspace*{5pt}

\begin{figure}[t!]
\centering
 \includegraphics[width = 360pt]{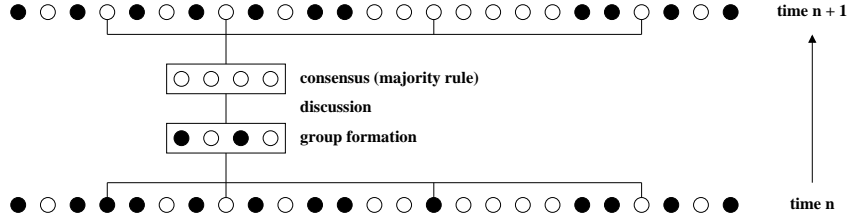}
\caption{\upshape One time step in the non-spatial public debate model with $s = 4$ and $N = 25$.
 Black dots represent individuals supporting opinion~$+1$ and white dots individuals supporting opinion~$-1$.}
\label{fig:debate-model}
\end{figure}


\noindent {\bf Spatial public debate model} --
 The third model studied in this paper is a spatial version of the public debate model introduced in \cite{lanchier_neufer_2013}.
 The spatial structure is represented by the infinite $d$-dimensional regular lattice.
 Each site of the lattice is occupied by one individual who is again characterized by their opinion: either opinion~$+1$ or
 opinion~$-1$.
 The population being located on a geometrical structure, space can be included by assuming that only individuals in the same
 neighborhood can interact.
 More precisely, we assume that the set of discussion groups is
 $$ x + B_s \quad \hbox{for} \quad x \in \Z^d \quad \hbox{where} \quad B_s := \{0, 1, \ldots, s - 1 \}^d $$
 Since the number of discussion groups is infinite and countable, the statement ``choosing a group uniformly at random'' is no
 longer well defined.
 Therefore, we define the process in continuous time using the framework of interacting particle systems assuming that discussion
 groups are updated independently at rate one, i.e., at the arrival times of independent Poisson processes with intensity one.
 The analysis in \cite{lanchier_neufer_2013} is concerned with the spatial model under the majority rule, whereas we focus on the
 proportional rule:
 all the individuals in the same discussion group adopt the same opinion with a probability equal to the fraction of
 supporters of this opinion in the group before discussion.
 Formally, the state of the process at time $t$ is now a function
 $$ \eta_t : \Z^d \ \longrightarrow \ \{-1, +1 \} $$
 with $\eta_t (x)$ denoting the opinion at time $t$ of the individual located at site $x$, and the dynamics of the process is
 described by the Markov generator
 $$ \begin{array}{rcl}
     Lf (\eta) & = &
    \displaystyle \sum_x \,\sum_{z \in B_s} \,s^{-d} \ \ind \{\eta (x + z) = + 1 \} \ [f (\tau_x^+ \eta) - f (\eta)] \vspace*{-4pt} \\ && \hspace*{60pt} + \
    \displaystyle \sum_x \,\sum_{z \in B_s} \,s^{-d} \ \ind \{\eta (x + z) = - 1 \} \ [f (\tau_x^- \eta) - f (\eta)] \end{array} $$
 where $\tau_x^+$ and $\tau_x^-$ are the operators defined on the set of configurations by
 $$ (\tau_x^+ \eta)(z) \ := \ \left\{\hspace*{-3pt} \begin{array}{ll}  + 1     & \hbox{for} \ z \in x + B_x \vspace*{2pt} \\
                                                                \eta (z) & \hbox{for} \ z \notin x + B_x \end{array} \right. \qquad
    (\tau_x^- \eta)(z) \ := \ \left\{\hspace*{-3pt} \begin{array}{ll}  - 1     & \hbox{for} \ z \in x + B_x \vspace*{2pt} \\
                                                                \eta (z) & \hbox{for} \ z \notin x + B_x. \end{array} \right. $$
 The first part of the generator indicates that, for each $x$, all the individuals in $x + B_s$ switch simultaneously to
 opinion~$+1$ at rate the fraction of individuals with opinion~$+1$ in the group.
 The second part gives similar transition rates for opinion~$-1$.
 In particular, basic properties of Poisson processes imply that each group is indeed updated at rate one according to the
 proportional rule.
 Note that the process no longer depends on $N$ since the population size is infinite but we will see that its behavior strongly
 depends on the spatial dimension $d$.


\section{Main results}
\label{sec:results}

\indent For the bottom-up hierarchical system and the non-spatial public debate model, the main problem is to determine the
 probability that a given opinion, say opinion~$+1$, wins, as a function of the density or number of individuals holding this
 opinion in the initial configuration.
 For the spatial public debate model, since the population is infinite, the time to reach a configuration in which all the
 individuals share the same opinion is almost surely infinite when starting from a configuration with infinitely many
 individuals of each type.
 In this case, the main problem is to determine whether opinions can coexist at equilibrium or not. \vspace*{5pt}


\noindent {\bf Galam's results} --
 Galam studied the bottom-up hierarchical system and the non-spatial public debate model under the majority rule.
 As previously explained, the assumption in \cite{galam_2008} about the initial configuration of each model is that individuals are
 independently in favor of opinion~$+1$ with some fixed probability.
 Under this assumption, the analysis is simplified because the probability of an individual being in favor of a given opinion at one level
 for the bottom-up hierarchical system or at one time step for the public debate model can be computed explicitly in a simple manner from
 its counterpart at the previous level or time step.
 More precisely, focusing on the bottom-up hierarchical system for concreteness, if $p_n$ is the common probability of any given
 individual being in favor of opinion~$+1$ at level~$n$ then the sequence $(p_n)$ can be computed recursively as follows:
 $$ p_n = Q_s (p_{n + 1}) \quad \hbox{where} \quad Q_s (X) \ := \ \sum_{j = s'}^s \ {s \choose j} \ X^j \,(1 - X)^{s - j} $$
 with $s' := \lceil (1/2)(s + 1) \rceil$.
 The probability that a given opinion wins the election can then be computed explicitly.
 For both models, in the limit as the population size tends to infinity, the problem reduces to finding the fixed points of the
 polynomial $Q_s$.
 When $s = 3$,
 $$ \begin{array}{rcl}
     Q_3 (X) - X & = & 3 \,X^2 (1 - X) + X^3 - X \vspace*{2pt} \\
                 & = & 3 \,X^2 (1 - X) + X \,(X - 1)(X + 1) \ = \ - X \,(X - 1)(2X - 1) \end{array} $$
 therefore $1/2$ is a fixed point.
 It follows that, with probability close to one when the population size is large, the winning opinion is the one that has initially the
 largest frequency of representants, a result that easily extends to all odd sizes.
 The case of even sizes is more intriguing.
 When the group size $s = 4$, we have
\begin{equation}
\label{eq:Q4}
  \begin{array}{rcl}
   Q_4 (X) - X & = & 4 \,X^3 (1 - X) + X^4 - X \vspace*{2pt} \\
               & = & - X \,(X - 1)(3 \,X^2 - X - 1) \ = \ - 3 \,X \,(X - 1)(X - c_-)(X - c_+) \end{array}
\end{equation}
 where the roots $c_-$ and $c_+$ are given by
 $$ c_- \ := \ \frac{1 - \sqrt{13}}{6} \ \approx \ - 0.434 \qquad \hbox{and} \qquad c_+ \ := \ \frac{1 + \sqrt{13}}{6} \ \approx \ 0.768. $$
 This implies that, when the population is large, the probability that opinion~$+1$ wins is near zero if the initial frequency of its
 representants is below $c_+ \approx 0.768$.
 It can be proved that the same result holds for the non-spatial public debate model when the population size is large.
 Because opinions are initially independent and of a given type with a fixed probability, the initial number of individuals with
 opinion~$+1$ is a binomial random variable, and the main reason behind the simplicity of Galam's results is that the dynamics of
 his models preserves this property: at any level/time, the number of individuals with opinion~$+1$ is again binomial.
 The first objective of this paper is to revisit Galam's results under the assumption that the initial number of individuals with
 opinion~$+1$ is a fixed deterministic number rather than binomially distributed.
 This assumption is more realistic for small populations but the analysis is also more challenging because
 the number of individuals with a given opinion in non-overlapping groups are no longer independent. \vspace*{5pt}


\noindent {\bf Bottom-up hierarchical system} --
 For the bottom-up hierarchical system, we start with a fixed deterministic number $x$ of individuals holding opinion~$+1$ at the
 bottom level.
 The main objective is then to determine the winning probability
\begin{equation}
\label{eq:proba-voting}
  \begin{array}{rcl}
     p_x (N, s) & := & \hbox{probability that opinion~$+1$ wins} \vspace*{4pt} \\
                & := & P \,(X_0 (1) = +1 \ | \ \card \{i : X_N (i) = + 1 \} = x) \end{array}
\end{equation}
 where $s$ is the group size and $N$ is the number of voting steps.
 Assuming that individuals holding the same opinion are identical, there are $s^N$ choose $x$ possible configurations at the bottom
 level of the system.
 To compute the probability \eqref{eq:proba-voting}, the most natural approach is to compute the number of such configurations that
 result in the election of candidate $+1$.
 This problem, however, is quite challenging so we use instead a different strategy.
 The main idea is to count configurations which are compatible with the victory of $+1$ going backwards in the hierarchy:
 we count the number of configurations at level one that result in the election of candidate $+1$, then the number of configurations
 at level two that result in any of these configurations at level one, and so on.
 To compute the number of such configurations, for each size-level pair $(s, n)$, we set
\begin{equation}
\label{eq:range}
  s' \ := \ \lceil (1/2)(s + 1) \rceil \quad \hbox{and} \quad I_{s, n} \ := \ \{0, 1, \ldots, (s' - 1)(s^n - x) + (s - s') \,x \}.
\end{equation}
 Then, for all $y \in I_{s, n}$, define
\begin{equation}
\label{eq:transition-s}
  c_n (s, x, s'x + y) \ = \ \sum_{z_0, \ldots, z_s} \ {s^n \choose x}^{-1} {s^n \choose z_0, z_1, \ldots, z_s} \ \prod_{j = 0}^s \ {s \choose j}^{z_j}
\end{equation}
 where the sum is over all $z_0, z_1, \ldots, z_s$ such that
 $$ z_0 + z_1 + \cdots + z_{s' - 1} \ = \ s^n - x \quad \hbox{and} \quad z_{s'} + z_{s' + 1} + \cdots + z_s \ = \ x $$
 and such that
 $$ \begin{array}{rcll}
    \sum_{j = 1}^{s' - 1} j \,(z_j + z_{s' + j}) & = & y & \hbox{if $s$ is odd} \vspace*{4pt} \\
    \sum_{j = 1}^{s' - 2} j \,(z_j + z_{s' + j}) + (s' - 1) \,z_{s' - 1} & = & y & \hbox{if $s$ is even}. \end{array} $$
 We will prove that the number of configurations with $s'x + y$ individuals holding opinion~$+1$ at level~$n + 1$ that result in a
 given configuration with $x$ individuals holding opinion~$+1$ at level~$n$ is exactly given by \eqref{eq:transition-s}.
 The fact that the evolution rules are deterministic also implies that different configurations at a given level cannot result from the
 same configuration at a lower level.
 In particular, the number of configurations at the bottom level that result in the victory of opinion~$+1$ can be deduced from
 a simple summation as in the proof of Chapman-Kolmogorov's equations in the theory of Markov chains.
 More precisely, we have the following theorem.
\begin{theor}[Bottom-up hierarchical system] --
\label{th:voting}
 For all $s \geq 3$, we have
\begin{equation}
\label{eq:chapman-kolmogorov}
  p_x (N, s) \ = \ {s^N \choose x}^{-1} \ \sum_{x_1 = 0}^s \ \sum_{x_2 = 0}^{s^2} \cdots \sum_{x_{N - 1} = 0}^{s^{N - 1}} \ \prod_{n = 1}^N \ c_n (s, x_{n - 1}, x_n)
\end{equation}
 where $x_0 = 1$ and $x_N = x$.
\end{theor}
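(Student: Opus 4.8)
The plan is to prove \eqref{eq:chapman-kolmogorov} in two stages: first establish the combinatorial meaning of the coefficients $c_n$ defined in \eqref{eq:transition-s}, and then assemble the winning probability by a Chapman--Kolmogorov-type summation whose validity rests on the determinism of the recursive rule. Throughout, let $\Phi_n$ be the deterministic map sending a level-$(n+1)$ configuration to the level-$n$ configuration it induces through $X_n(i) = \sign(\sum_j X_{n+1}(s(i-1)+j) - 1/2)$. Since $\sum X_{n+1}(\cdot) \ge 1$ is equivalent to having at least $s' = \lceil(s+1)/2\rceil$ children holding $+1$, each parent is $+1$ exactly when at least $s'$ of its $s$ children are, so $\Phi_n$ is a genuine function of the finer configuration.

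I would first prove the count \eqref{eq:transition-s}. Fix a level-$n$ configuration with $x$ individuals holding $+1$ and classify its $s^n$ nodes by the number $j \in \{0,1,\dots,s\}$ of their children holding $+1$, writing $z_j$ for the number of nodes of ``type'' $j$. Compatibility with the fixed configuration forces $j \ge s'$ at every $+1$ node and $j \le s'-1$ at every $-1$ node, which is precisely the pair of conditions $z_{s'}+\cdots+z_s = x$ and $z_0+\cdots+z_{s'-1} = s^n-x$. A level-$(n+1)$ configuration mapping to the fixed one is then built by (i) deciding which $+1$ nodes and which $-1$ nodes receive each admissible type, contributing $x!/(z_{s'}!\cdots z_s!)$ and $(s^n-x)!/(z_0!\cdots z_{s'-1}!)$, and (ii) choosing for each type-$j$ node which $j$ of its $s$ children hold $+1$, contributing $\binom{s}{j}^{z_j}$. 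Multiplying and summing over admissible $(z_j)$, the two factorial factors combine into $x!(s^n-x)!/\prod_j z_j! = \binom{s^n}{x}^{-1}\binom{s^n}{z_0,\dots,z_s}$, which is exactly \eqref{eq:transition-s}. It then remains to check that the side constraint records the total number $\sum_j j\,z_j$ of individuals holding $+1$ at the finer level: splitting the sum at the threshold $s'$ and pairing $z_j$ with $z_{s'+j}$ yields $\sum_j j\,z_j = s'x + y$, where the two displayed conditions on $y$ are the odd- and even-$s$ instances (for even $s$ the middle count $z_{s'-1}$ has no partner and is treated separately), and the extreme values of $y$ give the summation range $I_{s,n}$. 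In particular the count depends on the fixed configuration only through $x$, so it well-defines $c_n(s,x,\cdot)$.

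The second stage exploits determinism. Because each $\Phi_n$ is a function, the preimages of distinct coarser configurations are disjoint; iterating, every bottom configuration determines a unique sequence of configurations and hence a unique sequence of counts $(x_0,x_1,\dots,x_N)$ along the hierarchy, with $x_0 \in \{0,1\}$ equal to $1$ exactly when opinion $+1$ wins. Thus the number of bottom configurations with $x$ individuals holding $+1$ that elect $+1$ is obtained by fixing the root to $+1$ and summing, over all intermediate counts $x_1,\dots,x_{N-1}$, the number of bottom configurations realizing the count trajectory $x_0=1,\,x_1,\dots,x_{N-1},\,x_N=x$. I would prove the product form by induction on the level, working downward from the root: assuming that the number of level-$n$ configurations with $x_n$ ones mapping to the fixed root equals $\sum_{x_1,\dots,x_{n-1}}\prod_{k=1}^n c_k(s,x_{k-1},x_k)$, one passes to level $n+1$ by observing that each level-$(n+1)$ configuration maps to a unique level-$n$ configuration, that the number of level-$(n+1)$ configurations with $x_{n+1}$ ones over a fixed level-$n$ configuration with $x_n$ ones is $c_{n+1}(s,x_n,x_{n+1})$ from the first stage (the same for every such level-$n$ configuration), and then grouping the level-$n$ configurations by their number of ones $x_n$ and summing. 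This is the Markov-chain bookkeeping alluded to in the text. Dividing the resulting count by the total number $\binom{s^N}{x}$ of bottom configurations with $x$ ones gives \eqref{eq:chapman-kolmogorov}.

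I expect the main obstacle to lie entirely in the first stage; once disjointness of preimages is noted, the assembly in the second stage is forced. The delicate points in \eqref{eq:transition-s} are the collapse of the two families of multinomial choices into the single coefficient $\binom{s^n}{x}^{-1}\binom{s^n}{z_0,\dots,z_s}$, and, more subtly, the verification that the constraints defining $I_{s,n}$ genuinely parametrize finer configurations by their number of supporters of $+1$, with the parity of $s$ forcing the two separate forms of the $y$-condition. Everything downstream is a summation.
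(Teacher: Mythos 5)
Your proposal is correct and follows essentially the same route as the paper's proof: the identical block-type counting (the statistics $z_j$, the multinomial placement of block types, the factors ${s \choose j}^{z_j}$, and the collapse of the two multinomials into ${s^n \choose x}^{-1}{s^n \choose z_0, \ldots, z_s}$) to establish \eqref{eq:transition-s}, together with disjointness of preimages from determinism and the Chapman--Kolmogorov-style summation over intermediate counts, divided by ${s^N \choose x}$. The only cosmetic difference is that you obtain the well-definedness of $c_n (s, x, z)$ as a byproduct of the explicit count, whereas the paper first proves it as a separate permutation-injection argument (Lemma \ref{lem:card}) before computing the formula.
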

 The expression for the probability \eqref{eq:chapman-kolmogorov} cannot be simplified but for any fixed parameter it can be
 computed explicitly.
 In the case of groups of size $s = 3$, \eqref{eq:transition-s} reduces to
\begin{equation}
\label{eq:transition-3}
  c_n (3, x, 2x + y) \ = \ \sum_{i + j = y} {x \choose i} {3^n - x \choose j} \ 3^{x - i + j} \quad \hbox{for} \quad y \in \{0, \ldots, 3^n \}
\end{equation}
 while in the case of groups of size $s = 4$, this reduces to
\begin{equation}
\label{eq:transition-4}
  c_n (4, x, 3x + y) \ = \ \sum_{i + 2j + k = y} {x \choose i} {4^n - x \choose j} {4^n - x - j \choose k} \ 4^{x - i + k} \ 6^j
\end{equation}
 for all $y \in \{0, 1, \ldots, 2 \times 4^n - x \}$.
 Figure \ref{fig:voting} shows the probabilities computed from \eqref{eq:chapman-kolmogorov}--\eqref{eq:transition-4} for different
 values of the number of levels and group size along with the corresponding probabilities under Galam's assumption.
 The figure reveals that the phase transition is sharper when starting from a fixed number rather than a binomially distributed
 number of individuals holding a given opinion.
 This aspect is more pronounced when the population size is small. \vspace*{5pt}

\begin{figure}[t!]
\centering
 \includegraphics[width = 360pt]{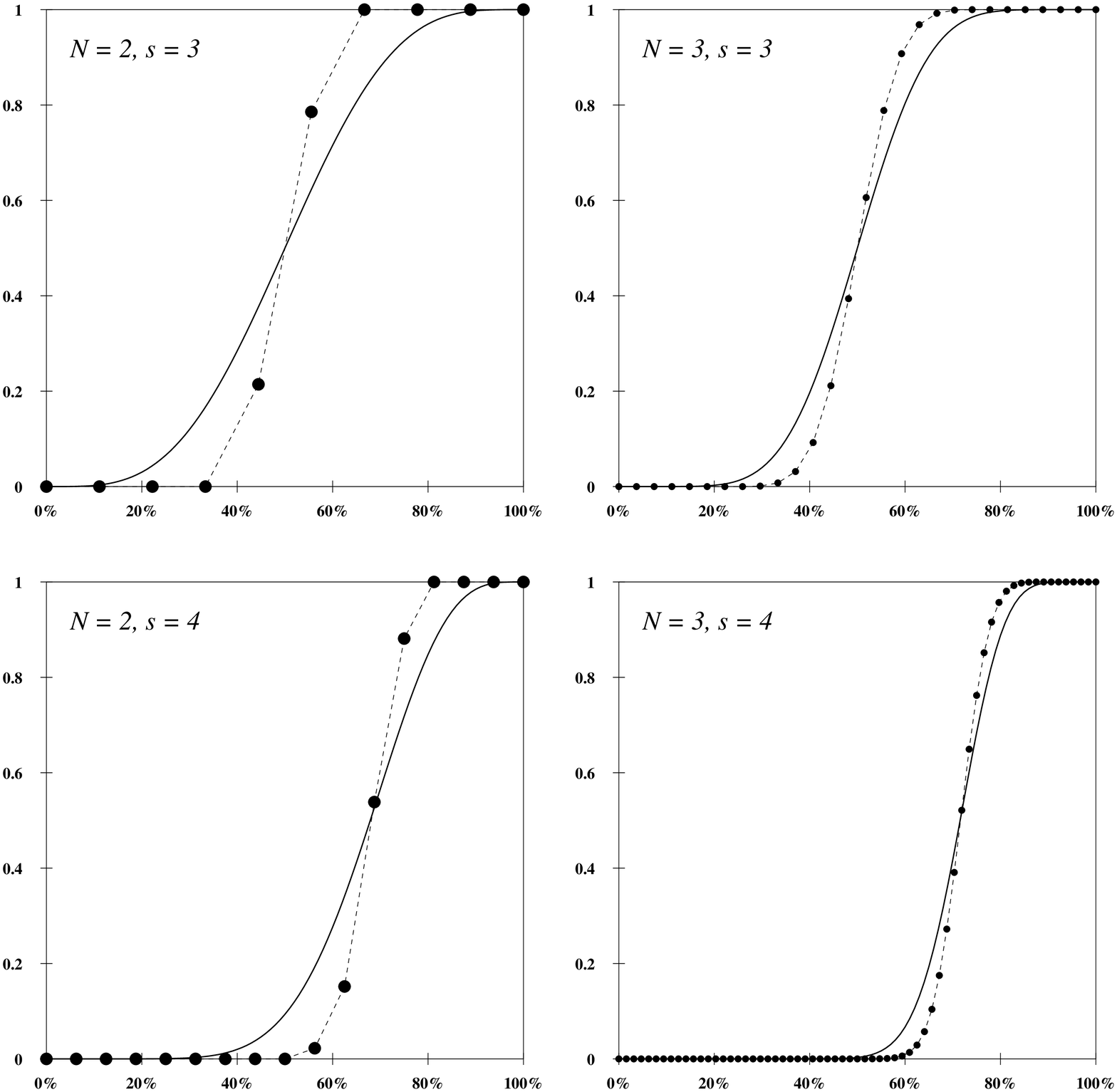}
\caption{\upshape Probability that opinion~$+1$ wins as a function of the initial density/number of its supporters at the bottom level
 of the bottom-up hierarchical system for different values of the number of levels and group size.
 The continuous black curve is the graph of the function $p \mapsto Q_s^N (p)$ corresponding to the winning probability when assuming
 that individuals at the bottom level hold independently opinion~$+1$ with probability $p$.
 The black dots are the probabilities computed from Theorem \ref{th:voting} when starting from a fixed number of individuals holding
 opinion~$+1$.}
\label{fig:voting}
\end{figure}




\begin{figure}[t!]
\centering
 \includegraphics[width = 360pt]{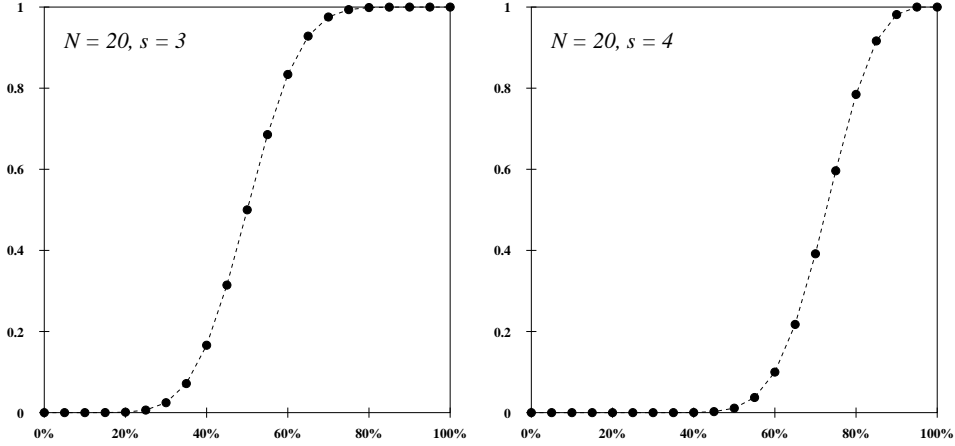}
\caption{\upshape Probability that opinion~$+1$ wins as a function of the initial number of its supporters in the non-spatial public
 debate model.
 The probabilities on the left ($s = 3$) are computed from the first part of Theorem \ref{th:debate-majority} whereas the ones on the
 right ($s = 4$) are computed recursively from a first-step analysis.}
\label{fig:debate-34}
\end{figure}

\noindent {\bf Non-spatial public debate model} --
 For the non-spatial public debate model, our main objective is again to determine the winning probability when starting from a fixed
 number of individuals holding opinion~$+1$.
 Since at each time step all the individuals are equally likely to be part of the chosen discussion group, the actual label on each
 individual is unimportant.
 In particular, we shall simply define $X_n$ as the number of individuals with opinion~$+1$ at time $n$ rather than the vector
 of opinions.
 With this new definition the winning probability can be written as
\begin{equation}
\label{eq:proba-debate}
  \begin{array}{rcl}
     p_x (N, s) & := & \hbox{probability that opinion~$+1$ wins} \vspace*{4pt} \\
                & := & P \,(X_n = N \ \hbox{for some} \ n > 0 \ | \ X_0 = x) \end{array}
\end{equation}
 where $s$ is the group size and $N$ is the total number of individuals.
 We start with the model under the majority rule.
 In this case, we have the following result.
\begin{theor}[non-spatial public debate model] --
\label{th:debate-majority}
 Under the majority rule,
 $$ p_x (N, 3) \ = \ 2^{- (N - 3)} \ \sum_{z = 0}^{x - 2} \ {N - 3 \choose z} \quad \hbox{for all} \ \ x \in \{2, \ldots, N - 2 \}. $$
 In addition, there exists $a_0 > 0$ such that, for all $\ep > 0$,
 $$ \begin{array}{rcll}
     p_x (N, 4) & \leq & \exp (- a_0 \ep N)     & \hbox{for all $N$ large and $x \in (0, (c_+ - 2 \ep) N)$} \vspace*{4pt} \\
     p_x (N, 4) & \geq & 1 - \exp (- a_0 \ep N) & \hbox{for all $N$ large and $x \in ((c_+ + 2 \ep) N, N)$}. \end{array} $$
\end{theor}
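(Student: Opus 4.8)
The plan is to regard $(X_n)$ as a discrete-time Markov chain on $\{0,1,\ldots,N\}$ whose only absorbing states are $0$ and $N$, so that $p_x(N,s)$ is the probability of absorption at $N$ started from $x$. For $s=3$ I would first record the one-step transitions: a uniformly chosen triple raises the count by one exactly when it consists of two $+1$'s and one $-1$, lowers it by one when it consists of one $+1$ and two $-1$'s, and leaves it unchanged otherwise, so that $P(X\to X+1)=\binom{x}{2}(N-x)/\binom{N}{3}$ and $P(X\to X-1)=x\binom{N-x}{2}/\binom{N}{3}$. Removing the holding probabilities (which does not affect absorption probabilities) yields a birth--death chain whose conditional up-probability simplifies to $u_x=(x-1)/(N-2)$. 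Since the chain can only fall from $x=1$ and only rise from $x=N-1$, the effective absorbing barriers are $1$ (loss) and $N-1$ (win), which explains the range $2\le x\le N-2$.

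I would then invoke the classical scale-function (gambler's-ruin) formula. Writing $m=x-1$, the ratios $q_m/p_m=(N-2-m)/m$ telescope into the partial products $\gamma_k=\prod_{j=1}^{k}(N-2-j)/j=\binom{N-3}{k}$, and the absorption probability at the upper barrier equals $\big(\sum_{k=0}^{x-2}\gamma_k\big)\big/\big(\sum_{k=0}^{N-3}\gamma_k\big)$. Because $\sum_{k=0}^{N-3}\binom{N-3}{k}=2^{N-3}$, this is exactly $2^{-(N-3)}\sum_{z=0}^{x-2}\binom{N-3}{z}$, the stated expression. This part is an exact, closed computation in which I anticipate no real difficulty.

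For $s=4$ the target is a sharp-threshold statement, and the natural quantity is the one-step drift $h(x):=E[X_{n+1}-X_n\mid X_n=x]$. A short computation with the hypergeometric group-composition probabilities gives $h(x)=4\,(Q_4(\rho)-\rho)+O(1/N)$ with $\rho=x/N$; by \eqref{eq:Q4} the leading term equals $4\rho(1-\rho)(3\rho^2-\rho-1)$, so $h<0$ on $(0,c_+)$ and $h>0$ on $(c_+,1)$, with $c_+$ an unstable fixed point of the mean dynamics. To prove the upper bound I would build an exponential supermartingale: for $\theta=1+\beta>1$ small, the bounded increments (valued in $\{-2,-1,0,1\}$) together with $h<0$ give $E[\theta^{\,X_{n+1}-X_n}\mid X_n=x]\le 1$ uniformly over $x\le(c_+-\ep)N$. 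Stopping $\theta^{X_n}$ at the first time it exceeds $(c_+-\ep)N$ and applying optional stopping bounds the probability of ever reaching that level, hence of ever reaching $N$, by $\theta^{\,x-(c_+-\ep)N}$; for $x<(c_+-2\ep)N$ this is at most $\exp(-a_0\ep N)$ for a suitable $a_0>0$. The lower bound for $x>(c_++2\ep)N$ follows by the symmetric argument applied to $N-X_n$ on the region $x\ge(c_++\ep)N$, where the drift points toward $N$.

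The step I expect to be the main obstacle is the exponential estimate near the separatrix $c_+$. Since $h$ vanishes there, the supermartingale can only be sustained on a region bounded away from $c_+$, which is precisely what forces the $2\ep$ buffer in the statement and makes the admissible $\beta$, and hence the rate, degenerate as $\ep\to0$; pinning down a single constant $a_0$ valid for all small $\ep$ is the delicate point. Two further technical nuisances must be absorbed into the choice of $\beta$ and $a_0$ for all $N$ large: the $-2$ jumps, which make the chain non-nearest-neighbour so that the elementary gambler's-ruin identity used for $s=3$ is unavailable, and the $O(1/N)$ gap between sampling without replacement and the binomial heuristic underlying $Q_4$.
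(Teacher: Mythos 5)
Your proposal is correct and takes essentially the same route as the paper: for $s=3$, the same first-step/birth--death computation with ratios $\mu_x=(N-x-1)/(x-1)$ telescoping to $\binom{N-3}{z-1}$ and effective barriers at $1$ and $N-1$; for $s=4$, the same exponential supermartingale plus optional-stopping argument with the $\ep$-buffer between $(c_+\pm2\ep)N$ and $(c_+\pm\ep)N$ (the paper passes to the embedded jump chain $Z_n$, which is exactly what legitimizes your claimed uniform bound $E(\theta^{X_{n+1}-X_n}\mid X_n=x)\le 1$ near $x\approx 0$, where the raw drift $h$ vanishes but the drift-to-jump-probability ratio stays bounded away from zero). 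The delicacy you flag about a single $a_0$ valid for all $\ep$ is real, but the paper's proof shares it: Lemma~\ref{lem:supermartingale} in fact produces $a_0=a_0(\ep)$.
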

 Note that the first probability in the theorem can be re-written as
 $$ p_x (N, 3) \ = \ \frac{\card \{A : A \subset \{1, 2, \ldots, N - 3 \} \ \hbox{and} \ \card (A) \leq x - 2 \}}{\card \{A : A \subset \{1, 2, \ldots, N - 3 \} \}}. $$
 Unfortunately, we do not know why the winning probability has this simple combinatorial interpretation but this is what
 follows from our calculation which is based on a first-step analysis, a standard technique in the theory of Markov chains.
 This technique can also be used to determine the winning probabilities for larger $s$ recursively, which is how the
 right-hand side of Figure~\ref{fig:debate-34} is obtained, but for $s > 3$ the algebra becomes too complicated to get an
 explicit formula.
 Interestingly, the second part of the theorem shows that the critical threshold $c_+ \approx 0.768$ obtained under Galam's
 assumption appears again under our assumption on the initial configuration, though it comes from a different calculation.
 This result follows partly from an application of the optimal stopping theorem for supermartingales.
 Turning to the non-spatial public debate model under the proportional rule, first-step analysis is again problematic
 when the group size exceeds three.
 Nevertheless, the winning probabilities can be computed explicitly.
\begin{theor}[non-spatial public debate model] --
\label{th:debate-proportional}
 Under the proportional rule,
 $$ p_x (N, s) \ = \ x/N \quad \hbox{for all} \quad s > 1. $$
\end{theor}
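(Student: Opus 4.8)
The plan is to show that, under the proportional rule, the number of individuals holding opinion~$+1$ is a bounded martingale whose only absorbing states are~$0$ and~$N$, and then to conclude with the optional stopping theorem. The value $x/N$ is exactly what one expects from a fair martingale pinned between the two consensus states, and the factor independent of~$s$ reflects that the relevant conditional expectation vanishes for every group composition.

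First I would check the martingale property. Fix $n \geq 1$, condition on $X_{n - 1} = k$, and let $W$ denote the number of individuals with opinion~$+1$ in the discussion group $B_s$ selected at step~$n$. The proportional rule turns the whole group into $+1$ with probability $W/s$, in which case $X_n = k + (s - W)$, and into $-1$ with probability $(s - W)/s$, in which case $X_n = k - W$. Hence, conditionally on~$W$,
$$ E \,[X_n - X_{n - 1} \mid W] \ = \ \frac{W}{s} \,(s - W) + \frac{s - W}{s} \,(- W) \ = \ 0, $$
and since this vanishes for every value of~$W$, we obtain $E \,[X_n \mid X_{n - 1}] = X_{n - 1}$. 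Thus $(X_n)$ is a martingale for its natural filtration, regardless of the (hypergeometric) law of~$W$ and of the group size~$s$. Note also that $X_n \in \{0, 1, \ldots, N \}$ with $0$ and $N$ absorbing, since a monochromatic group leaves all opinions unchanged.

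Second, I would argue that the process is absorbed almost surely, i.e.~that $X_n$ eventually reaches $\{0, N \}$. Being bounded, the martingale converges almost surely to a limit $X_\infty$; being integer valued, it is in fact eventually constant. It therefore suffices to rule out that this eventual value is an interior state $k \in \{1, \ldots, N - 1 \}$. This is where the only real care is needed: from any such~$k$ there is at least one individual of each opinion, so one can form a discussion group containing both types, which is selected with positive probability and then rendered monochromatic with probability at least $1/s$, forcing $X_n \neq k$. Hence there is $\delta > 0$ with $P \,(X_n \neq k \mid X_{n - 1} = k) \geq \delta$, so the probability of remaining at~$k$ for $m$ consecutive steps is at most $(1 - \delta)^m \to 0$; this is incompatible with the process being eventually equal to~$k$. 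Consequently $X_\infty \in \{0, N \}$ almost surely.

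Finally, since $X_0 = x$ and $(X_n)$ is a bounded martingale, the dominated convergence theorem (equivalently, the optional stopping theorem applied at the almost surely finite absorption time) gives
$$ x \ = \ E \,[X_0] \ = \ \lim_{n \to \infty} \,E \,[X_n] \ = \ E \,[X_\infty] \ = \ N \cdot P \,(X_\infty = N). $$
Because $\{X_\infty = N \}$ is exactly the event that opinion~$+1$ wins, this yields $p_x (N, s) = x/N$, uniformly in $s > 1$. I expect the almost-sure absorption step to be the main obstacle, as the martingale property and the final identity are a direct computation and a standard limiting argument, respectively.
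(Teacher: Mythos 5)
Your proof is correct, and it reaches the same destination by the same overall skeleton as the paper --- martingale property plus optional stopping --- but the key verification is done by a genuinely different and slicker decomposition. The paper proves the zero-drift property globally: it writes out the hypergeometric transition probabilities $r_j(x) = \binom{N}{s}^{-1}\binom{x}{s-j}\binom{N-x}{j}\frac{s-j}{s}$ and $l_j(x) = \binom{N}{s}^{-1}\binom{x}{j}\binom{N-x}{s-j}\frac{s-j}{s}$, checks that the index sets satisfy $J_x = s - I_x$, and uses the substitution $j \mapsto s-j$ to show $\sum_j j\,r_j(x) = \sum_j j\,l_j(x)$. You instead condition on the group composition $W$ and observe that $E[X_n - X_{n-1} \mid W] = \frac{W}{s}(s-W) - \frac{s-W}{s}W = 0$ identically, so the law of $W$ never enters; this is more elementary, avoids all the combinatorics, and makes transparent why the answer is independent of $s$. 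You also buy something the paper skips: the paper simply asserts that $T := \min(T_+,T_-)$ is almost surely finite, whereas you actually prove absorption (bounded integer-valued martingale is eventually constant; interior states are left with probability bounded below, so a geometric bound rules them out as limits). One cosmetic slip: under the proportional rule a mixed group is rendered monochromatic with probability one, not merely at least $1/s$ (perhaps you meant that a \emph{specific} consensus opinion is reached with probability at least $1/s$); since any monochromatic outcome from a mixed group changes the count, your $\delta$ exists a fortiori, so this does not affect the argument.
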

 In words, under the proportional rule, the probability that opinion~$+1$ wins is simply equal to the initial fraction
 of individuals holding this opinion.
 As for the second part of Theorem \ref{th:debate-majority}, the proof of this result relies in part on an application of
 the optimal stopping theorem. \vspace*{5pt}


\noindent {\bf Spatial public debate model} --
 Contrary to the non-spatial public debate model, for the spatial version starting with a finite number of individuals
 with opinion~$+1$, the number of such individuals does not evolve according to a Markov chain because the actual location
 of these individuals matters.
 However, under the proportional rule, the auxiliary process that keeps track of the number of individuals with opinion
 $+1$ is a martingale with respect to the natural filtration of the spatial model.
 Since it is also integer-valued and the population is infinite, it follows from the martingale convergence theorem that
 opinion~$+1$ dies out with probability one.
 Therefore, to avoid trivialities, we return to Galam's assumption for the spatial model:
 we assume that individuals independently support opinion~$+1$ with probability $\theta \in (0, 1)$.
 Since the population is infinite, both opinions are present at any time, and the main objective is now to determine
 whether they can coexist at equilibrium.
 The answer depends on the spatial dimension $d$, as for the voter model \cite{clifford_sudbury_1973, holley_liggett_1975}.
\begin{theor}[spatial public debate model] --
\label{th:spatial-debate}
 Under the proportional rule,
\begin{itemize}
 \item the system clusters in $d \leq 2$, i.e.,
 $$ \begin{array}{l} \lim_{\,t \to \infty} \ P \,(\eta_t (x) \neq \eta_t (y)) \ = \ 0 \quad \hbox{for all} \quad x, y \in \Z^d. \end{array} $$
 \item both opinions coexist in $d \geq 3$, i.e., $\eta_t$ converges in distribution to an invariant measure in which
 there is a positive density of both opinions.
\end{itemize}
\end{theor}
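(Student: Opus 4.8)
The plan is to exploit the linearity of the proportional rule by constructing a dual process of coalescing random walks, in the same spirit as the analysis of the voter model \cite{clifford_sudbury_1973, holley_liggett_1975}. The starting observation is that updating a group $x + B_s$ under the proportional rule is equivalent to choosing a representative site $w$ uniformly at random in $x + B_s$ and setting every individual of the group to the opinion $\eta (w)$ held by this representative before the update, simply because the probability that $w$ holds opinion~$+1$ is exactly the fraction of supporters of~$+1$ in the group. Working in the graphical representation, where each group $x + B_s$ carries a rate one Poisson process together with an independent sequence of uniform representatives, I would trace back in time the origin of the opinion at a space-time point $(y, t)$. This defines for each site $y$ a backward random walk $\hat\xi^y$ with $\eta_t (y) = \eta_0 (\hat\xi^y_t)$, and the walks issued from distinct sites move independently until they enter a common group, at which instant they jump to the same representative and coalesce. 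A direct computation shows that a single dual walk moves from $a$ to $a + v$ at rate
$$ p (v) \ = \ s^{-d} \ \prod_{i = 1}^d \ (s - |v_i|)_+ \qquad \hbox{for} \qquad v = (v_1, \ldots, v_d), $$
so that $p$ is symmetric, has mean zero and finite range, and charges each of the unit displacements $\pm e_i$. In particular the walk is irreducible and, being symmetric, mean zero and of finite variance on $\Z^d$, it is recurrent for $d \leq 2$ and transient for $d \geq 3$.

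With the dual in hand, the two-point function reduces to a coalescence probability. Since the initial configuration is a product measure with density $\theta$, conditioning on the positions of the two ancestors gives
$$ P \,(\eta_t (x) \neq \eta_t (y)) \ = \ 2 \,\theta \,(1 - \theta) \ P \,(\hat\xi^x_t \neq \hat\xi^y_t), $$
because ancestors sitting at distinct sites read independent initial opinions whereas coalesced ancestors read the same one. It therefore suffices to decide whether the two dual walks coalesce almost surely. Writing $D_t := \hat\xi^x_t - \hat\xi^y_t$ for the difference before coalescence, I note that the two walks use disjoint families of Poisson processes as long as $\|D_t\|_\infty \geq s$, so that on this region $D$ evolves as the difference of two independent copies of $p$, which is again symmetric, mean zero and of finite variance; the dynamics of $D$ are altered only inside the bounded set $C := \{v : \|v\|_\infty \leq s - 1\}$ on which the two walks share a group. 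Such a bounded perturbation changes neither recurrence nor transience, so $D$ returns to $C$ infinitely often when $d \leq 2$ and only finitely often when $d \geq 3$.

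For $d \leq 2$, each visit of $D$ to the finite set $C$ is followed, before the next exit from $C$, by a coalescence with probability bounded below by some $\delta > 0$ uniformly in the configuration, since from every pair of sites sharing a group there is a positive rate of jumping to a common representative. The strong Markov property then gives $P (\hat\xi^x_t \neq \hat\xi^y_t) \leq (1 - \delta)^n$ on the event that $D$ has already made $n$ returns to $C$, and as there are infinitely many such returns almost surely the walks coalesce almost surely, so the two-point function tends to zero and the system clusters. For $d \geq 3$, transience yields, with positive probability, an excursion that leaves $C$ without coalescing followed by a permanent escape from $C$, so the two walks never share a group and never coalesce; consequently $P (\eta_t (x) \neq \eta_t (y))$ stays bounded away from zero. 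To turn this into the stated coexistence I would invoke the duality once more: for a finite set $A$ one has $P (\eta_t \equiv -1 \ \hbox{on} \ A) = E [(1 - \theta)^{N^A_t}]$, where $N^A_t$ is the number of distinct coalescing walks issued from $A$; since $N^A_t$ is nonincreasing and integer valued it converges almost surely, and bounded convergence together with inclusion-exclusion shows that all finite dimensional distributions of $\eta_t$ converge. The limit is a translation invariant stationary measure whose single site marginal is still $\theta$ — the density being conserved, as each group sum is a martingale — and whose two-point function is strictly positive, hence a nontrivial measure with a positive density of each opinion.

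The main obstacle is the treatment of the two walks while they are close. Far apart they are genuinely independent and the classical recurrence/transience dichotomy transfers verbatim, but inside $C$ they are correlated through the shared representative, and this correlation must be controlled in both directions: a uniform lower bound $\delta$ on the per-visit coalescence probability is needed for clustering, while a positive probability of avoiding coalescence is needed for coexistence. Both are soft consequences of the irreducibility and finiteness of the close region, yet they are precisely where the group-update dual departs from the single-site voter model dual, and they rely on the explicit kernel $p$ derived above.
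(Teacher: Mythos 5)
Your proposal is correct and follows essentially the same route as the paper: the representative-site reformulation of the proportional rule (the paper's Lemma \ref{lem:equivalent}), the dual coalescing random walks with the same explicit kernel $s^{-d} \prod_j (s - |w_j|)$ (Lemma \ref{lem:lineage}), recurrence of the difference walk for clustering in $d \leq 2$, and transience plus the duality identity $P(\xi_t \cap A = \varnothing) = E((1-\theta)^{|\hat\xi_t(A,t)|})$ for coexistence in $d \geq 3$. If anything, you are more careful than the paper on one point: you explicitly control the correlation of the two dual walks when they lie within range $s$ of each other (the bounded perturbation set $C$ and the uniform per-visit coalescence probability $\delta$), whereas the paper simply asserts the walks are independent until coalescence.
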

 The proof relies on a certain duality relationship between the spatial model and coalescing random walks, just as for the
 voter model, though this relationship is somewhat hidden in the case of the public debate model.
 Before proving our theorems, we point out that the spatial public debate model under the majority rule has also been recently
 studied in \cite{lanchier_neufer_2013}.
 There, it is proved that the one-dimensional process clusters when the group size $s$ is odd whereas opinion~$-1$ invades the lattice
 and outcompetes the other opinion when the group size is even.
 It is also proved based on a rescaling argument that opinion~$-1$ wins in two dimensions when $s^2 = 2 \times 2 = 4$.

\section{Proof of Theorem \ref{th:voting} (bottom-up hierarchical system)}
\label{sec:voting}

\indent The main objective is to count the number of configurations at level~$N$ with $x$ individuals with opinion~$+1$
 that will deterministically result in the election of type~$+1$ president after $N$ consecutive voting steps.
 Even though the evolution rules of the voting system are deterministic (recall that the model is only stochastic through its
 random initial configuration), our approach is somewhat reminiscent of the theory of Markov chains.
 The idea is to reverse time by thinking of the type of the president at level zero as the initial state, and more generally
 the configuration at level~$n$ as the state at time $n$.
 In the theory of discrete-time Markov chains, the distribution at time $n$ given the initial state can be computed by looking
 at the $n$th power of the transition matrix, which keeps track of the probabilities of all possible sample paths that connect two
 particular states in $n$ time steps.
 To this extend, the right-hand side of \eqref{eq:chapman-kolmogorov} can be seen as the analog of the $n$th power of a transition
 matrix, or Chapman-Kolmogorov's equation, with however two exceptions.
 First, the expression \eqref{eq:chapman-kolmogorov} is more complicated because the number of individuals per level is not
 constant and therefore the evolution rules are not homogeneous in time.
 Second, and more importantly, the transition probability from $x \to z$ at time $n$ is replaced by an integer, namely
\begin{equation}
\label{eq:transition}
  \begin{array}{rcl}
   c_n (s, x, z) & := & \hbox{the number of configurations with $z$ individuals holding} \\
                 &    & \hbox{opinion~$+1$ at level~$n + 1$ that result in a given configuration} \\
                 &    & \hbox{with $x$ individuals holding opinion~$+1$ at level~$n$.} \end{array}
\end{equation}
 By thinking of the bottom-up hierarchical system going backwards in time, the question becomes:
 how many configurations with $x$ individuals of type~$+1$ at time/level~$N$ result from the initial configuration +1 at time/level zero,
 which corresponds to the victory of type~$+1$ president.
 To make the argument rigorous and prove \eqref{eq:chapman-kolmogorov}, we first define
 $$ \card X \ := \ \card \{i \in \{1, 2, \ldots, s^n \} : X (i) = 1 \} \quad \hbox{for all} \quad X \in \Lambda_{s^n} := \{-1, +1 \}^{s^n}. $$
 Recall that if $Z \in \Lambda_{s^{n + 1}}$ then the configuration $X$ at level~$n$ is given by
 $$ \begin{array}{l} X (i) \ := \ \sign (\sum_{j = 1}^s Z (s (i - 1) + j) - 1/2) \quad \hbox{for all} \quad i = 1, 2, \ldots, s^n. \end{array} $$
 which we write $Z \to X$.
 We also say that configuration $Z$ induces configuration $X$.
 More generally, we say that configuration $Z \in \Lambda_{s^m}$ induces configuration $X \in \Lambda_{s^n}$ if
 $$ \hbox{for all} \ i \in \{n, n + 1, \ldots, m - 1\}, \ \hbox{there exists} \ X^i \in \Lambda_{s^i} \ \hbox{such that} \ X^{i + 1} \to X^i $$
 where $X^m = Z$ and $X^n = X$, which we again write $Z \to X$.
 Finally, we let
 $$ c_n (s, X, z) \ := \ \card \{Z \in \Lambda_{s^{n + 1}} : Z \to X \ \hbox{and} \ \card Z = z \} \quad \hbox{for all} \quad X \in \Lambda_{s^n} $$
 denote the number of configurations with $z$ individuals of type~$+1$ at level~$n + 1$ that induce configuration $X$ at level~$n$.
 The first key is that $c_n (s, X, z)$ only depends on the number of type~$+1$ individuals in configuration $X$, which is proved in the following lemma.

\begin{lemma} --
\label{lem:card}
 Let $X, Y \in \Lambda_{s^n}$. Then,
 $$ \card X = \card Y \quad \hbox{implies that} \quad c_n (s, X, z) = c_n (s, Y, z). $$
\end{lemma}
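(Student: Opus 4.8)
The plan is to exhibit an explicit bijection between the two sets of configurations being counted, so that their cardinalities must coincide. The starting point is that the recursive rule $Z \to X$ is \emph{local}: partition the $s^{n+1}$ nodes at level~$n + 1$ into the $s^n$ consecutive blocks $B_i := \{s (i - 1) + 1, \ldots, s i\}$, so that the value $X (i)$ of the $i$th node at level~$n$ is determined solely by the restriction of $Z$ to $B_i$, via $X (i) = \sign (\sum_{j = 1}^s Z (s (i - 1) + j) - 1/2)$. In particular each block contributes independently to the induced configuration, and the constraint imposed on $B_i$ is simply that it contain at least $s'$ entries equal to~$+1$ when $X (i) = + 1$ and at most $s' - 1$ such entries when $X (i) = - 1$.

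Now suppose $\card X = \card Y$. Then there is a permutation $\pi$ of $\{1, 2, \ldots, s^n\}$ with $Y (i) = X (\pi (i))$ for every $i$, simply because $X$ and $Y$ have the same number of $+1$ entries: one matches the $+1$ positions of $X$ with those of $Y$ and the $-1$ positions with one another. I would then lift $\pi$ to a permutation of the level-$(n + 1)$ nodes that permutes whole blocks: given $Z \in \Lambda_{s^{n + 1}}$, define $\Phi (Z) = Z'$ by copying block $B_{\pi (i)}$ of $Z$ into block $B_i$ of $Z'$, i.e. $Z' (s (i - 1) + j) := Z (s (\pi (i) - 1) + j)$ for $j = 1, \ldots, s$ and $i = 1, \ldots, s^n$.

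The key step will be to check that $\Phi$ carries $\{Z : Z \to X\}$ into $\{Z' : Z' \to Y\}$ while preserving $\card$. For the induction property, apply the local rule to $Z'$: since block $B_i$ of $Z'$ equals block $B_{\pi (i)}$ of $Z$, the node it induces is exactly the one that $Z$ induces at position $\pi (i)$, namely $X (\pi (i)) = Y (i)$; hence $Z' \to Y$. For the preservation of $\card$, note that $\Phi$ merely rearranges the $\pm 1$ entries of $Z$, so $\card \Phi (Z) = \card Z$. Finally $\Phi$ is invertible, its inverse being the lift of $\pi^{-1}$ built in the same way. Thus $\Phi$ restricts to a bijection from $\{Z : Z \to X, \ \card Z = z\}$ onto $\{Z : Z \to Y, \ \card Z = z\}$ for each~$z$, and counting both sides yields $c_n (s, X, z) = c_n (s, Y, z)$.

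I expect no serious obstacle here; the only point requiring care is the commutation between the block permutation $\Phi$ and the induction map, and this is immediate once the locality of the rule is made explicit. As an alternative one could avoid bijections altogether and expand $c_n (s, X, z)$ as a sum over the within-block counts $k_i$ weighted by $\prod_i {s \choose k_i}$, subject to $k_i \geq s'$ on the $+1$-blocks, $k_i \leq s' - 1$ on the $-1$-blocks, and $\sum_i k_i = z$; this expression manifestly depends on $X$ only through the number of $+1$-blocks, that is, through $\card X$. The bijective argument is cleaner, however, and has the advantage of treating the odd and even group sizes uniformly.
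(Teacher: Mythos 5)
Your proof is correct and follows essentially the same route as the paper's: both match the $+1$ positions of $X$ and $Y$ by a permutation and lift it to a block permutation $\Phi$ of the level-$(n+1)$ configurations that commutes with the induction rule and preserves $\card$. The only cosmetic difference is that you conclude by exhibiting the inverse $\Phi_{\pi^{-1}}$ directly, whereas the paper proves injectivity of $\phi$ and then invokes the symmetry of the problem to get equality.
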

\begin{proof}
 Since $\card X = \card Y$, there exists
 $$ \sigma \in \mathfrak S_{s^n} \quad \hbox{such that} \quad Y (i) = X (\sigma (i)) \quad \hbox{for} \ i = 1, 2, \ldots, s^n $$
 where $\mathfrak S_{s^n}$ denotes the permutation group.
 Using the permutation $\sigma$, we then construct an endomorphism on the set of configurations at level~$n + 1$ by setting
 $$ (\phi (Z))(s (i - 1) + j) \ := \ Z (s (\sigma (i) - 1) + j) \quad \hbox{for} \ i = 1, 2, \ldots, s^n \ \hbox{and} \ j = 1, 2, \ldots, s. $$
 In words, partitioning configurations into $s^n$ consecutive blocks of size $s$, we set
 $$ \hbox{$i$th block of $\phi (Z)$ \ := \ $\sigma (i)$th block of $Z$} \quad \hbox{for all} \quad i = 1, 2, \ldots, s^n. $$
 Now, we observe that
 $$ \begin{array}{rcl}
     Z \to X & \hbox{if and only if} & X (i) = \sign (\sum_{j = 1}^s Z (s (i - 1) + j) - 1/2) \ \ \hbox{for all} \ \ i \vspace*{4pt} \\
             & \hbox{if and only if} & X (\sigma (i)) = \sign (\sum_{j = 1}^s Z (s (\sigma (i) - 1) + j) - 1/2) \ \ \hbox{for all} \ \ i \vspace*{4pt} \\
             & \hbox{if and only if} & Y (i) = \sign (\sum_{j = 1}^s (\phi (Z))(s (i - 1) + j) - 1/2) \ \ \hbox{for all} \ \ i \vspace*{4pt} \\
             & \hbox{if and only if} & \phi (Z) \to Y. \end{array} $$
 Since in addition $\card Z = \card \phi (Z)$, which directly follows from the fact that $\phi (Z)$ is obtained from a permutation of the blocks of size $s$
 in $Z$, we deduce that
 $$ \phi \,(\{Z : Z \to X \ \hbox{and} \ \card Z = z \}) \,\subset \,\{Z : Z \to Y \ \hbox{and} \ \card Z = z \}. $$
 That is, for all $Z$ in the first set, $\phi (Z)$ is a configuration in the second set.
 To conclude, we observe that the function $\phi$ is an injection from the first set to the second set. Indeed,
 $$ \begin{array}{rcl}
     Z \neq Z' & \hbox{implies that} & Z (s (i - 1) + j) \neq Z' (s (i - 1) + j) \quad \hbox{for some} \ i, j \vspace*{4pt} \\
               & \hbox{implies that} & Z (s (\sigma (i) - 1) + j) \neq Z' (s (\sigma (i) - 1) + j) \quad \hbox{for some} \ i, j \vspace*{4pt} \\
               & \hbox{implies that} & (\phi (Z))(s (i - 1) + j) \neq (\phi (Z'))(s (i - 1) + j) \quad \hbox{for some} \ i, j \vspace*{4pt} \\
               & \hbox{implies that} & \phi (Z) \neq \phi (Z'). \end{array} $$
 The injectivity of $\phi$ implies that
 $$ \begin{array}{rcl}
     c_n (s, X, z) & = & \card \{Z \in \Lambda_{s^{n + 1}} : Z \to X \ \hbox{and} \ \card Z = z \} \vspace*{4pt} \\
                   & \leq & \card \{Z \in \Lambda_{s^{n + 1}} : Z \to Y \ \hbox{and} \ \card Z = z \} \ = \ c_n (s, Y, Z). \end{array} $$
 In particular, the lemma follows from the obvious symmetry of the problem.
\end{proof} \\ \\
 In view of Lemma \ref{lem:card}, for all $x \in \{0, 1, \ldots, s^n \}$, we can write
 $$ c_n (s, X, z) := c_n (s, x, z) \quad \hbox{for all} \quad X \in \Lambda_{s^n} \quad \hbox{with} \quad \card X = x. $$
 The interpretation of $c_n (s, x, z)$ is given in \eqref{eq:transition}.
 The next step to establish \eqref{eq:chapman-kolmogorov} is given by the following lemma which follows from the deterministic nature of the evolution rules.
\begin{lemma} --
\label{lem:disjoint}
 Let $X, Y \in \Lambda_{s^n}$. Then,
 $$ X \neq Y \quad \hbox{implies that} \quad \{Z \in \Lambda_{s^{n + 1}} : Z \to X \} \,\cap \,\{Z \in \Lambda_{s^{n + 1}} : Z \to Y \} \ = \ \varnothing. $$
\end{lemma}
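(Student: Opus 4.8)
The plan is to exploit the purely deterministic nature of the one-step evolution rule. The starting observation is that the formula $X(i) = \sign(\sum_{j=1}^s Z(s(i-1)+j) - 1/2)$ assigns to each $Z \in \Lambda_{s^{n+1}}$ a single well-defined value at every coordinate $i$, so the relation $Z \to X$ is in fact the graph of a function from $\Lambda_{s^{n+1}}$ into $\Lambda_{s^n}$. In other words, each configuration $Z$ at level $n+1$ induces exactly one configuration at level $n$, with no ambiguity.

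I would then argue by contraposition. Suppose some $Z \in \Lambda_{s^{n+1}}$ belongs to both sets, meaning simultaneously $Z \to X$ and $Z \to Y$. By the definition of the relation, for every index $i \in \{1, 2, \ldots, s^n\}$ we have both $X(i) = \sign(\sum_{j=1}^s Z(s(i-1)+j) - 1/2)$ and $Y(i) = \sign(\sum_{j=1}^s Z(s(i-1)+j) - 1/2)$. Since the right-hand side depends only on $Z$, it follows that $X(i) = Y(i)$ for all $i$, and therefore $X = Y$. Taking the contrapositive, $X \neq Y$ forces the two sets $\{Z : Z \to X\}$ and $\{Z : Z \to Y\}$ to be disjoint, which is precisely the assertion of the lemma.

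The argument is essentially immediate, and I expect no genuine obstacle: the only point to verify is that the recursive rule leaves no freedom in passing from $Z$ to its image, and this is already encoded in the definition of $\to$. The purpose of this lemma is organizational rather than technical. Together with Lemma \ref{lem:card}, it guarantees that distinct configurations at a given level pull back to disjoint sets of preimages one level up, so that the number of bottom-level configurations compatible with the victory of opinion $+1$ can be assembled level by level through a single summation, exactly as in the derivation of the Chapman-Kolmogorov equation for discrete-time Markov chains.
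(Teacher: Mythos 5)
Your proof is correct and rests on exactly the same observation as the paper's: the one-step rule is deterministic, so each $Z$ induces a unique configuration, and a common preimage of $X$ and $Y$ would force $X = Y$. The paper merely phrases the same argument in the other direction (fixing $i$ with $X(i) \neq Y(i)$ and concluding that any $Z \to X$ and $Z' \to Y$ must differ), so the two proofs are logically interchangeable; yours is, if anything, the slightly more streamlined formulation.
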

\begin{proof}
 To begin with, observe that the assumption implies that
 $$ X (i) \neq Y (i) \quad \hbox{for some} \quad i = 1, 2, \ldots, s^n. $$
 In particular, if $Z \to X$ and $Z' \to Y$ then for this specific $i$ we have
 $$ \begin{array}{rcl}
     X (i) & = & \sign (\sum_{j = 1}^s Z (s (i - 1) + j) - 1/2) \vspace*{4pt} \\
           & \neq & \sign (\sum_{j = 1}^s Z' (s (i - 1) + j) - 1/2) \ = \ Y (i) \end{array} $$
 which in turn implies that
 $$ Z (s (i - 1) + j) \neq Z' (s (i - 1) + j) \quad \hbox{for some} \quad j = 1, 2, \ldots, s. $$
 In conclusion, $Z \neq Z'$.
 This completes the proof.
\end{proof} \\ \\
 Recalling \eqref{eq:transition} and using the fact that there is only one configuration at level zero in which type~$+1$ is president as well as
 the previous lemma, we deduce that the product
 $$ c_1 (s, x_0, x_1) \,c_2 (s, x_1, x_2) \ \cdots \ c_{N - 1} (s, x_{N - 2}, x_{N - 1}) \,c_N (s, x_{N - 1}, x_N) $$
 is the number of configurations with $x_N$ type~$+1$ individuals at level~$N$ that consecutively induce a configuration with $x_n$ type~$+1$ individuals
 at level~$n$.
 The number of configurations with $x$ type~$+1$ individuals at level~$N$ that result in the election of type~$+1$ is then obtained by setting
 $x_0 = 1$ and $x_N = x$ and by summing over all the possible values of the other $x_n$ which gives
 $$ \card \{X \in \Lambda_{s^N} : X \to (1) \ \hbox{and} \ \card X = x \} \ = \
    \sum_{x_1 = 0}^s \ \sum_{x_2 = 0}^{s^2} \cdots \sum_{x_{N - 1} = 0}^{s^{N - 1}} \ \prod_{n = 1}^N \ c_n (s, x_{n - 1}, x_n). $$
 As previously explained, this equation can be seen as the analog of Chapman-Kolmogorov's equation for time-heterogeneous Markov chains, though it
 represents a number of configurations rather than transition probabilities.
 Finally. since there are $s^N$ choose $x$ configurations with exactly $x$ type~$+1$ individuals at level~$N$, we deduce that the conditional
 probability that type~$+1$ is elected given that there are $x$ type~$+1$ individuals at the bottom of the hierarchy is
 $$ p_x (N, s) \ = \ {s^N \choose x}^{-1} \ \sum_{x_1 = 0}^s \ \sum_{x_2 = 0}^{s^2} \cdots \sum_{x_{N - 1} = 0}^{s^{N - 1}} \ \prod_{n = 1}^N \ c_n (s, x_{n - 1}, x_n). $$
 To complete the proof of the theorem, the last step is to compute $c_n (s, x, z)$.
 As a warming up, we start by proving equation \eqref{eq:transition-3}, the special case when $s = 3$.
\begin{lemma} --
\label{lem:number-3}
 For all $y \in \{0, 1, \ldots, 3^n \}$, we have
 $$ c_n (3, x, 2x + y) \ = \ \sum_{i + j = y} {x \choose i} {3^n - x \choose j} \ 3^{x - i + j}. $$
\end{lemma}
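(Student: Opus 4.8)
The plan is to reduce the count to an independent product over the $3^n$ blocks of three children at level~$n + 1$ and then perform a routine enumeration. First I would invoke Lemma~\ref{lem:card} to fix once and for all an arbitrary configuration $X \in \Lambda_{3^n}$ with $\card X = x$, which is legitimate since $c_n (3, X, z)$ depends on $X$ only through $x$. Partitioning the $3^{n + 1}$ nodes at level~$n + 1$ into the $3^n$ consecutive blocks of size three, the defining relation $Z \to X$ reads
$$ X (i) \ = \ \sign \Big( \sum_{j = 1}^3 Z (3 (i - 1) + j) - 1/2 \Big) \quad \hbox{for all} \quad i = 1, 2, \ldots, 3^n, $$
and the crucial observation is that $X (i)$ depends only on the $i$th block of $Z$. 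The constraints on the different blocks are therefore decoupled, so the number of admissible configurations $Z$ factorizes into a product of local counts, one per block.

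Next I would enumerate the local possibilities dictated by the majority rule for the odd size $s = 3$. For a block whose parent satisfies $X (i) = + 1$, at least two of the three children must be~$+1$: either exactly two, which happens in ${3 \choose 2} = 3$ ways and contributes two~$+1$'s, or all three, which happens in ${3 \choose 3} = 1$ way and contributes three. For a block whose parent satisfies $X (i) = - 1$, at most one child is~$+1$: either none, in ${3 \choose 0} = 1$ way contributing zero, or exactly one, in ${3 \choose 1} = 3$ ways contributing one. Writing $i$ for the number of the $x$ positive-parent blocks that are fully positive and $j$ for the number of the $3^n - x$ negative-parent blocks carrying a single~$+1$, the total number of~$+1$'s in $Z$ equals $2 (x - i) + 3 i + j = 2x + i + j$. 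Hence $\card Z = 2x + y$ holds precisely when $i + j = y$.

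It then remains to assemble the count. For a fixed admissible pair $(i, j)$, I choose the $i$ fully positive blocks among the $x$ positive-parent blocks in ${x \choose i}$ ways, the remaining $x - i$ positive-parent blocks each admit $3$ arrangements, the $j$ singly-positive blocks are chosen among the $3^n - x$ negative-parent blocks in ${3^n - x \choose j}$ ways with each contributing a factor $3$, and all other blocks are forced. Multiplying these independent choices gives ${x \choose i} \,{3^n - x \choose j} \,3^{x - i} \,3^{j}$, and summing over all $(i, j)$ with $i + j = y$ yields the announced formula. I do not anticipate a genuine obstacle here; the only point requiring care is the bookkeeping that simultaneously tracks, block by block, both the local arrangement count and the running total of~$+1$'s, so that the excess $y = \card Z - 2x$ is correctly split between the two types of blocks.
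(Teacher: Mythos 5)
Your proposal is correct and follows essentially the same route as the paper: fix a reference configuration via Lemma~\ref{lem:card}, exploit the block-by-block decoupling of the majority constraint, classify the $x$ positive-parent blocks (two versus three children of type $+1$) and the $3^n - x$ negative-parent blocks (zero versus one), and observe that the excess $y = \card Z - 2x$ splits as $i + j$; your variables $i$ and $j$ are exactly the paper's $z_3$ and $z_1$, and your assembled count ${x \choose i}{3^n - x \choose j}\,3^{x - i + j}$ coincides with its formula term by term.
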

\begin{proof}
 Fix $X \in \Lambda_{s^n}$ with $\card X = x$.
 Assume that $Z \to X$ and let $z_j$ denote the number of blocks of size three with exactly $j$ type~$+1$ individuals, i.e.,
 $$ \begin{array}{l} z_j \ := \ \card \{i : \sum_{k = 1}^3 Z (3 (i - 1) + k) = j - (3 - j) \} \quad \hbox{for all} \quad j = 0, 1, 2, 3. \end{array} $$
 The fact that $\card X = x$ imposes
\begin{equation}
\label{eq:number-31}
  z_0 + z_1 \ = \ 3^n - x \quad \hbox{and} \quad z_2 + z_3 \ = \ x.
\end{equation}
 This implies that, for configuration $Z$,
\begin{itemize}
 \item there are $x$ choose $z_3$ permutations of the blocks with 2 or 3 type~$+1$ individuals, \vspace*{2pt}
 \item there are $3^n - x$ choose $z_1$ permutations of the blocks with 0 or 1 type~$+1$ individuals, \vspace*{2pt}
 \item there are 3 choose $j$ possible blocks of size three with $j$ type~$+1$ individuals.
\end{itemize}
 In particular, the number of $Z \to X$ with $z_j$ blocks with $j$ type~$+1$ individuals is
\begin{equation}
\label{eq:number-32}
 {x \choose z_3} {3^n - x \choose z_1} \ \prod_{j = 0}^3 \ {3 \choose j}^{z_j} \ = \ {x \choose z_3} {3^n - x \choose z_1} \ 3^{z_1 + z_2}.
\end{equation}
 Using again \eqref{eq:number-31} and the definition of $z_j$ also implies that
 $$ \begin{array}{rcl}
    \card Z & = & z_1 + 2 z_2 + 3 z_3 \vspace*{2pt} \\
            & = & z_1 + 2 \,(x - z_3) + 3 z_3 \ = \ 2x + z_1 + z_3 \,\in \,\{2x, 2x + 1, \ldots, 2x + 3^n \} \end{array} $$
 which gives the range for $y$ in the statement of the lemma and
 $$ y \ := \ (\card Z) - 2x \ = \ z_1 + z_3. $$
 This, together with \eqref{eq:number-32} and $z_1 + z_2 = z_1 + x - z_3$, finally gives
 $$ c_n (3, x, 2x + y) \ = \sum_{z_1 + z_3 = y} {x \choose z_3} {3^n - x \choose z_1} \ 3^{z_1 + z_2} \ = \sum_{z_1 + z_3 = y} {x \choose z_3} {3^n - x \choose z_1} \ 3^{x - z_3 + z_1}. $$
 This completes the proof.
\end{proof} \\ \\
 Following the same approach, we now prove the general case \eqref{eq:transition-s}.
\begin{lemma} --
\label{lem:number-s}
 For all $(s, n)$ and all $y \in I_{s, n}$ as defined in \eqref{eq:range}, we have
 $$ c_n (s, x, s'x + y) \ = \ \sum_{z_0, \ldots, z_s} \ {s^n \choose x}^{-1} {s^n \choose z_0, z_1, \ldots, z_s} \ \prod_{j = 0}^s \ {s \choose j}^{z_j} $$
 where the sum is over all $z_0, z_1, \ldots, z_s$ such that
 $$ z_0 + z_1 + \cdots + z_{s' - 1} \ = \ s^n - x \quad \hbox{and} \quad z_{s'} + z_{s' + 1} + \cdots + z_s \ = \ x $$
 and such that
 $$ \begin{array}{rcll}
    \sum_{j = 1}^{s' - 1} j \,(z_j + z_{s' + j}) & = & y & \hbox{if $s$ is odd} \vspace*{4pt} \\
    \sum_{j = 1}^{s' - 2} j \,(z_j + z_{s' + j}) + (s' - 1) \,z_{s' - 1} & = & y & \hbox{if $s$ is even}. \end{array} $$
\end{lemma}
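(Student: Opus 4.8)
The plan is to count directly the configurations $Z \in \Lambda_{s^{n+1}}$ that induce a fixed $X$ with $\card X = x$ (legitimate by Lemma \ref{lem:card}), classifying the $s^n$ blocks of size $s$ according to the number of type-$+1$ individuals they contain. The first step is to identify the threshold separating the blocks that induce $+1$ from those that induce $-1$. A block containing exactly $j$ type-$+1$ individuals has sum $2j - s$, hence induces $+1$ precisely when $\sign (2j - s - 1/2) = +1$, that is, when $j > s/2 + 1/4$, or equivalently $j \geq s'$ with $s' = \lceil (s+1)/2 \rceil$. Thus a block induces $+1$ iff $j \in \{s', \ldots, s\}$ and induces $-1$ iff $j \in \{0, \ldots, s'-1\}$; in particular, when $s$ is even the tie class $j = s/2 = s' - 1$ induces $-1$, reflecting the bias toward opinion~$-1$ built into Galam's rule.

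Following the approach of Lemma \ref{lem:number-3}, I would then set
$$ z_j \ := \ \card \{i : \sum_{k=1}^s Z(s(i-1)+k) = 2j - s \} \quad \hbox{for} \quad j = 0, 1, \ldots, s. $$
Since $X$ has exactly $x$ coordinates equal to $+1$ occupying fixed positions, the $x$ blocks that induce $+1$ must carry labels $j \in \{s', \ldots, s\}$ and the $s^n - x$ blocks that induce $-1$ the labels $j \in \{0, \ldots, s'-1\}$, giving the two constraints $z_0 + \cdots + z_{s'-1} = s^n - x$ and $z_{s'} + \cdots + z_s = x$. For a fixed admissible vector $(z_0, \ldots, z_s)$, the number of inducing configurations factors as: (i) the number of ways to distribute the labels among the fixed $+1$- and $-1$-positions, namely $\binom{x}{z_{s'}, \ldots, z_s} \binom{s^n - x}{z_0, \ldots, z_{s'-1}}$, times (ii) the number of internal arrangements $\prod_{j=0}^s \binom{s}{j}^{z_j}$ choosing which $j$ of the $s$ slots in each block hold $+1$. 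A one-line factorial identity, using the two constraints, then rewrites the product of the two multinomials as $\binom{s^n}{x}^{-1} \binom{s^n}{z_0, \ldots, z_s}$, matching the coefficient in the statement.

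It remains to translate the condition $\card Z = s'x + y$ into the stated linear constraints. Writing $\card Z = \sum_{j=0}^s j z_j$ and subtracting $s'x = \sum_{j = s'}^s s' z_j$ yields
$$ y \ = \ \card Z - s'x \ = \ \sum_{j=1}^{s'-1} j \,z_j + \sum_{i=1}^{s - s'} i \,z_{s'+i}. $$
The parity of $s$ enters here: for odd $s$ one has $s - s' = s' - 1$, so the two sums pair up into $\sum_{j=1}^{s'-1} j \,(z_j + z_{s'+j})$; for even $s$ one has $s - s' = s' - 2$, so the top $-1$-label $j = s' - 1$ (the tie class) has no $+1$-counterpart and splits off as the extra term $(s' - 1) \,z_{s'-1}$, producing the second case of the statement. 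Summing the coefficient over all $(z_0, \ldots, z_s)$ satisfying these conditions gives the lemma. I expect the only genuine obstacle to be this parity bookkeeping --- keeping straight, in particular, that for even $s$ the tie class is counted among the $-1$-inducing blocks --- while the range $I_{s,n}$ in \eqref{eq:range} is recovered routinely by extremizing $\card Z$ (all $-1$-blocks at label $s'-1$, all $+1$-blocks at label $s$).
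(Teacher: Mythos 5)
Your proposal is correct and follows essentially the same route as the paper's proof: the same block statistics $z_j$, the same two constraints forced by $\card X = x$, the same factorization into a multinomial placement count times $\prod_{j=0}^s \binom{s}{j}^{z_j}$ with the identity $\binom{x}{z_{s'},\ldots,z_s}\binom{s^n-x}{z_0,\ldots,z_{s'-1}} = \binom{s^n}{x}^{-1}\binom{s^n}{z_0,\ldots,z_s}$, and the same rearrangement of $\card Z - s'x$ with the parity split at $s - s' = s'-1$ (odd) versus $s - s' = s'-2$ (even). The only difference is cosmetic: you make explicit the threshold computation $\sign(2j - s - 1/2) = +1 \iff j \geq s'$ and the tie-class bias, which the paper leaves implicit.
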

\begin{proof}
 Again, we fix $X \in \Lambda_{s^n}$ with $\card X = x$, let $Z \to X$ and
 $$ \begin{array}{l} z_j \ := \ \card \{i : \sum_{k = 1}^s Z (s (i - 1) + k) = j - (s - j) \} \quad \hbox{for all} \quad j = 0, 1, \ldots, s. \end{array} $$
 The fact that $\card X = x$ now imposes
\begin{equation}
\label{eq:number-s1}
  z_0 + z_1 + \cdots + z_{s' - 1} \ = \ s^n - x \quad \hbox{and} \quad z_{s'} + z_{s' + 1} + \cdots + z_s \ = \ x.
\end{equation}
 This and the definition of $z_j$ imply that
 $$ \begin{array}{rcl}
    \card Z & = & (z_1 + 2 z_2 + \cdots + (s' - 1) \,z_{s' - 1}) \ + \ (s' z_{s'} + \cdots + s z_s) \vspace*{2pt} \\
            & = & (z_1 + 2 z_2 + \cdots + (s' - 1) \,z_{s' - 1}) \vspace*{2pt} \\ && \hspace*{25pt} + \
                   s' (x - z_{s' + 1} - \cdots - z_s) \ + \ ((s' + 1) \,z_{s' + 1} + \cdots + s \,z_s) \vspace*{2pt} \\
            & = &  s' x + z_1 + 2 \,z_2 + \cdots + (s' - 1) \,z_{s' - 1} + z_{s' + 1} + 2 \,z_{s' + 2} + \cdots + (s - s') \,z_s \vspace*{6pt} \\
    \card Z & \in & s' x + \{0, 1, \ldots, (s' - 1)(s^n - x) + (s - s') \,x \} \end{array} $$
 which gives the range for $y$.
 Rearranging the terms also gives
\begin{equation}
\label{eq:number-s2}
  \begin{array}{rcll}
   y \ := \ (\card Z) - s' x & = & \sum_{j = 1}^{s' - 1} j \,(z_j + z_{s' + j}) \quad & \hbox{if $s$ is odd} \vspace*{4pt} \\
                             & = & \sum_{j = 1}^{s' - 2} j \,(z_j + z_{s' + j}) + (s' - 1) \,z_{s' - 1} \quad & \hbox{if $s$ is even}. \end{array}
\end{equation}
 Now, using again \eqref{eq:number-s1}, we obtain:
 $$ \begin{array}{rcl}
    \begin{array}{c} \hbox{number of permutations of the blocks} \\ \hbox{with at least $s'$ type~$+1$ individuals} \end{array} & = &
    \displaystyle {x \choose z_{s'}, \ldots, z_s} \ := \ \frac{x!}{z_{s'}! \ \cdots \ z_s!} \vspace*{12pt} \\
    \begin{array}{c} \hbox{number of permutations of the blocks} \\ \hbox{with at most $s' - 1$ type~$+1$ individuals} \end{array} & = &
    \displaystyle {s^n - x \choose z_0, \ldots, z_{s' - 1}} \ := \ \frac{(s^n - x)!}{z_0! \ \cdots \ z_{s' - 1}!} \end{array} $$
 Since there are $s$ choose $j$ possible blocks of size $s$ with $j$ type~$+1$ individuals, the number of configurations with $z_j$ blocks
 with $j$ type~$+1$ individuals that induce $X$ is then
 $$ {x \choose z_{s'}, \ldots, z_s} {s^n - x \choose z_0, \ldots, z_{s' - 1}} \ \prod_{j = 0}^s \ {s \choose j}^{z_j} = \
    {s^n \choose x}^{-1} {s^n \choose z_0, z_1, \ldots, z_s} \ \prod_{j = 0}^s \ {s \choose j}^{z_j}. $$
 This implies that, for all suitable $y$,
 $$ c_n (s, x, s'x + y) \ = \ \sum_{z_0, \ldots, z_s} \ {s^n \choose x}^{-1} {s^n \choose z_0, z_1, \ldots, z_s} \ \prod_{j = 0}^s \ {s \choose j}^{z_j} $$
 where the sum is over all $z_0, z_1, \ldots, z_s$ such that \eqref{eq:number-s1} and \eqref{eq:number-s2} hold.
\end{proof}


\section{Proof of Theorems \ref{th:debate-majority} and \ref{th:debate-proportional} (non-spatial public debate model)}
\label{sec:debate}

\noindent This section is devoted to the proof of Theorems \ref{th:debate-majority} and \ref{th:debate-proportional} which deal
 with the nonspatial public debate model.
 There is no more hierarchical structure and the evolution rules are now stochastic.
 At each time step, $s$ distinct individuals are chosen uniformly at random to form a discussion group, which results in all the
 individuals within the group reaching a consensus.
 The new opinion is chosen according to either the majority rule or the proportional rule. \vspace*{5pt}


\noindent {\bf Majority rule and size 3} --
 In this case, the process can be understood by simply using a first-step analysis whose basic idea is to condition on all the
 possible outcomes of the first update and then use the Markov property to find a relationship among the winning probabilities for
 the process starting from different states.
 We point out that this approach is only tractable when $s = 3$ due to a small number of possible outcomes at each update.
\begin{lemma} --
\label{lem:debate-majority}
 Under the majority rule, we have
 $$ p_x (N, 3) \ = \ 2^{- (N - 3)} \ \sum_{z = 0}^{x - 2} \ {N - 3 \choose z} \quad \hbox{for all} \ \ x = 2, 3, \ldots, N - 2. $$
\end{lemma}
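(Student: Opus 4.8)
The plan is to follow the first-step analysis indicated in the text, working with the process $X_n$ that records only the number of individuals holding opinion~$+1$, which (as noted earlier in the paper) is a birth-death Markov chain on $\{0,1,\ldots,N\}$ with absorbing states $0$ and $N$. First I would record the one-step transitions by counting which discussion groups trigger a change. Since $s=3$ is odd there are no ties, so a group changes the count by $+1$ exactly when it contains two $+1$'s and one $-1$, by $-1$ exactly when it contains one $+1$ and two $-1$'s, and all other groups --- those with three alike --- leave $X_n$ fixed. With $\binom{N}{3}$ the total number of groups, this gives
$$ P(X_{n+1} = x+1 \mid X_n = x) = \frac{\binom{x}{2}(N-x)}{\binom{N}{3}}, \qquad P(X_{n+1} = x-1 \mid X_n = x) = \frac{x\binom{N-x}{2}}{\binom{N}{3}}, $$
the remaining mass being the holding probability.

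Because $p_x(N,3)$ is an absorption probability it is unaffected by the self-loops, so I would pass to the embedded chain conditioned on an actual move. Factoring $x(N-x)/2$ out of both numerators and using $(x-1)+(N-x-1)=N-2$ collapses the conditional up/down probabilities to the strikingly simple linear form
$$ q_+(x) = \frac{x-1}{N-2}, \qquad q_-(x) = \frac{N-x-1}{N-2}. $$
In particular $q_+(1)=0$ and $q_-(N-1)=0$, so state~$1$ is driven deterministically to~$0$ (a loss) and state~$N-1$ to~$N$ (a win); hence, writing $h(x):=p_x(N,3)$, the effective boundary conditions are $h(1)=0$ and $h(N-1)=1$.

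First-step analysis then yields the gambler's-ruin recursion $h(x)=q_+(x)\,h(x+1)+q_-(x)\,h(x-1)$ for $2\le x\le N-2$. Introducing the successive differences $d_x:=h(x)-h(x-1)$ reduces this to the one-term recurrence $d_{x+1}=\gamma_x d_x$ with ratio $\gamma_x=q_-(x)/q_+(x)=(N-x-1)/(x-1)$. The crux is to recognize that the telescoping product of these ratios is a binomial coefficient:
$$ d_x = d_2\prod_{i=2}^{x-1}\frac{N-1-i}{i-1} = d_2\,\frac{(N-3)!}{(N-x-1)!\,(x-2)!} = d_2\binom{N-3}{x-2}, \qquad 2\le x\le N-1. $$
Summing from $h(1)=0$ gives $h(x)=d_2\sum_{z=0}^{x-2}\binom{N-3}{z}$, and imposing $h(N-1)=1$ forces $d_2\,2^{N-3}=1$, i.e. $d_2=2^{-(N-3)}$, which is exactly the asserted formula.

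I expect the only genuine obstacles to be the two algebraic simplifications: reducing the conditional probabilities to the clean linear form $q_\pm$, and identifying the product of ratios $\gamma_i$ with $\binom{N-3}{x-2}$. Everything else is the textbook gambler's-ruin solution via successive differences; the one point requiring care is verifying the boundary behaviour at $x=1$ and $x=N-1$, so that the reduced chain carries the correct absorbing endpoints used in the normalization.
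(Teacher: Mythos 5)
Your proposal is correct and follows essentially the same route as the paper: a first-step analysis yielding the gambler's-ruin difference recursion with ratio $\mu_x = (N-x-1)/(x-1)$, the telescoping product identified as $\binom{N-3}{x-2}$, and normalization via the boundary values $p_1 = 0$, $p_{N-1} = 1$. The only cosmetic difference is that you pass to the embedded jump chain with $q_\pm$ summing to one, whereas the paper keeps the holding probability and cancels it directly in the identity $q_1(x)\,(p_{x+1}-p_x) = q_{-1}(x)\,(p_x - p_{x-1})$; the resulting computation is identical.
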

\begin{proof}
 The first step is to exhibit a relationship among the probabilities to be found by conditioning on all the possible outcomes of the first update.
 Recall that
 $$ p_x \ := p_x (N, 3) \ = \ P \,(X_n = N \ \hbox{for some} \ n \,| \,X_0 = x) $$
 and, for $x = 2, 3, \ldots, N - 2$, let $\mu_x := q_{-1} (x) / q_1 (x)$ where
\begin{equation}
\label{eq:left-right}
  q_j (x) \ := \ P \,(X_{n + 1} = x + j \,| \,X_n = x) \quad \hbox{for} \ j = -1, 1.
\end{equation}
 Conditioning on the possible values for $X_1$ and using the Markov property, we obtain
 $$ \begin{array}{rcl}
     p_x & = & P \,(X_n = N \ \hbox{for some} \ n \,| \,X_1 = x - 1) \ P \,(X_1 = x - 1 \,| \,X_0 = x) \vspace*{4pt} \\ && \hspace*{40pt} + \
               P \,(X_n = N \ \hbox{for some} \ n \,| \,X_1 = x) \ P \,(X_1 = x \,| \,X_0 = x) \vspace*{4pt} \\ && \hspace*{40pt} + \
               P \,(X_n = N \ \hbox{for some} \ n \,| \,X_1 = x + 1) \ P \,(X_1 = x + 1 \,| \,X_0 = x) \vspace*{4pt} \\ & = &
               q_{-1} (x) \ p_{x - 1} + (1 - q_{-1} (x) + q_1 (x)) \ p_x + q_1 (x) \ p_{x + 1}. \end{array} $$
 In particular, $q_1 (x) \,(p_{x + 1} - p_x) = q_{-1} (x) \,(p_x - p_{x - 1})$ so a simple induction gives
 $$ \begin{array}{rcl}
     p_{x + 1} - p_x & = & \mu_x \ (p_x - p_{x - 1}) \ = \mu_x \,\mu_{x - 1} \ (p_{x - 1} - p_{x - 2}) \vspace*{4pt} \\
                     & = & \cdots \ = \ \mu_x \,\mu_{x - 1} \ \cdots \ \mu_2 \ (p_2 - p_1) \ = \ \mu_x \,\mu_{x - 1} \ \cdots \ \mu_2 \ p_2. \end{array} $$
 Using again that $p_1 = 0$, it follows that
\begin{equation}
\label{eq:debate-majority-1}
  p_x \ = \ \sum_{z = 1}^{x - 1} \ (p_{z + 1} - p_z) \ = \ \bigg(1 + \ \sum_{z = 2}^{x - 1} \ \mu_2 \,\mu_3 \ \cdots \ \mu_z \bigg) \ p_2.
\end{equation}
 Now, using that $p_{N - 1} = 1$, we obtain
\begin{equation}
\label{eq:debate-majority-2}
  p_{N - 1} \ = \bigg(1 + \ \sum_{z = 2}^{N - 2} \ \mu_2 \,\mu_3 \ \cdots \ \mu_z \bigg) \ p_2 \ = \ 1.
\end{equation}
 Combining \eqref{eq:debate-majority-1} and \eqref{eq:debate-majority-2}, we deduce that
\begin{equation}
\label{eq:debate-majority-3}
  \begin{array}{rcl}
    p_x & = & \displaystyle \bigg(1 + \ \sum_{z = 2}^{x - 1} \ \mu_2 \,\mu_3 \ \cdots \ \mu_z \bigg) \ p_2 \vspace*{4pt} \\
        & = & \displaystyle \bigg(1 + \ \sum_{z = 2}^{x - 1} \ \mu_2 \,\mu_3 \ \cdots \ \mu_z \bigg) \bigg(1 + \ \sum_{z = 2}^{N - 2} \ \mu_2 \,\mu_3 \ \cdots \ \mu_z \bigg)^{-1}. \end{array}
\end{equation}
 To find an explicit expression for \eqref{eq:debate-majority-3}, the last step is to compute $q_{-1} (x)$ and $q_1 (x)$.
 Observing that these two probabilities are respectively the probability of selecting a group with one type~$+1$ individual and the
 probability of selecting a group with two type~$+1$ individuals, we get
 $$ q_{-1} (x) \ = \ {N \choose 3}^{-1} {x \choose 1} {N - x \choose 2} \quad \hbox{and} \quad q_1 (x) \ = \ {N \choose 3}^{-1} {x \choose 2} {N - x \choose 1}. $$
 This gives the following expression for the ratio:
 $$ \mu_x \ := \ \frac{q_{-1} (x)}{q_1 (x)} \ = \ \frac{x \,(N - x)(N - x - 1)}{x \,(x - 1)(N - x)} \ = \ \frac{N - x - 1}{x - 1} $$
 for $x = 2, 3, \ldots, N - 2$, and the following expression for the product:
\begin{equation}
\label{eq:debate-majority-4}
 \begin{array}{rcl}
    \displaystyle \mu_2 \,\mu_3 \ \cdots \ \mu_z & = &
    \displaystyle \frac{N - 3}{1} \ \frac{N - 4}{2} \ \cdots \ \frac{N - z - 1}{z - 1} \vspace*{8pt} \\ & = &
    \displaystyle \frac{(N - 3)!}{(z - 1)! \ (N - z - 2)!} \ = \ {N - 3 \choose z  - 1} \end{array}
\end{equation}
 for $z = 2, 3, \ldots, N - 2$.
 Finally, combining \eqref{eq:debate-majority-3} and \eqref{eq:debate-majority-4}, we obtain
 $$ \begin{array}{rcl}
     p_x & = & \displaystyle \bigg(1 + \ \sum_{z = 2}^{x - 1} \ {N - 3 \choose z - 1} \bigg) \bigg(1 + \ \sum_{z = 2}^{N - 2} \ {N - 3 \choose z - 1} \bigg)^{-1} \vspace*{8pt} \\
         & = & \displaystyle \bigg(\sum_{z = 0}^{N - 3} \ {N - 3 \choose z} \bigg)^{-1} \ \sum_{z = 0}^{x - 2} \ {N - 3 \choose z} \ = \
               \displaystyle 2^{- (N - 3)} \ \sum_{z = 0}^{x - 2} \ {N - 3 \choose z} \end{array} $$
 for all $x \in \{2, \ldots, N - 2 \}$.
 This completes the proof.
\end{proof} \\


\noindent {\bf Majority rule and size 4} --
 Increasing the common size of the discussion groups, a first-step analysis can again be used to find a recursive formula for the
 winning probabilities but the algebra becomes too messy to deduce an explicit formula.
 Instead, we prove lower and upper bounds for the winning probabilities using the optimal stopping theorem for supermartingales.
 To describe more precisely our approach, consider the transition probabilities
 $$ q_j (x) \ := \ P \,(X_{n + 1} - X_n = j \,| \,X_n = x) \quad \hbox{for} \quad j = -2, -1, 0, 1 $$
 as well as the new Markov chain $(Z_n)$ with transition probabilities
 $$ p (0, 0) \ = \ p (N, N) \ = \ 1 \quad \hbox{and} \quad p (x,x + j) \ = \ q_j (x) \,(q_1 (x) + q_{-1} (x) + q_{-2} (x))^{-1} $$
 for all $x = 1, 2, \ldots, N - 1$ and all $j = -2, -1, 1$.
 The process $(Z_n)$ can be seen as the random sequence of states visited by the public debate model until fixation.
 In particular,
 $$ p_x (N, 4) \ := \ P \,(X_n = N \ \hbox{for some} \ n > 0) \ = \ P \,(Z_n = N \ \hbox{for some} \ n > 0). $$
 The main idea of the proof is to first identify exponentials of the process $(Z_n)$ that are supermartingales and then apply the
 optimal stopping theorem to these processes.
 We start by proving that the drift of the Markov chain is either negative or positive depending on whether the number of individuals
 in favor of the $+ 1$ opinion is smaller or larger than $c_+ \approx 0.768$.
 In particular, we recover the critical threshold $c_+$ found by Galam using a different calculation.
\begin{lemma} --
\label{lem:drift}
 For all $\ep > 0$,
 $$ \begin{array}{rcll}
     E \,(Z_{n + 1} - Z_n \,| \,Z_n = x) & \leq & - \ (1/2)(\sqrt{13} - 1) \,\ep + O (N^{-1}) & \hbox{for all} \ \ x \in (0, (c_+ - \ep) N) \vspace*{4pt} \\
                                         & \geq & + \ \sqrt{13} \,\ep + O (N^{-1}) & \hbox{for all} \ \ x \in ((c_+ + \ep) N, N). \end{array} $$
\end{lemma}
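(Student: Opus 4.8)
The plan is to compute the drift of $(Z_n)$ explicitly, reduce it to an elementary rational function of the density $p := x/N$, and then isolate the factor responsible for the threshold $c_+$.

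First I would record the one-step law of $(X_n)$ for $s=4$ under the majority rule. If a group of four is drawn from a population with $x$ individuals of type~$+1$, then the count of type-$+1$ members moves by $+1$ when the group has three such members, by $-2$ when it has two (a tie, resolved to $-1$), by $-1$ when it has one, and is unchanged when the group is homogeneous; sampling without replacement therefore gives
$$ q_1(x) = {N \choose 4}^{-1}{x \choose 3}(N-x), \quad q_{-1}(x) = {N \choose 4}^{-1} x {N-x \choose 3}, \quad q_{-2}(x) = {N \choose 4}^{-1}{x \choose 2}{N-x \choose 2}. $$
Since $(Z_n)$ is $(X_n)$ conditioned to actually move, its drift is the ratio
$$ E(Z_{n+1}-Z_n \mid Z_n = x) = \frac{q_1(x)-q_{-1}(x)-2q_{-2}(x)}{q_1(x)+q_{-1}(x)+q_{-2}(x)}, $$
in which the prefactor ${N \choose 4}^{-1}$ cancels.

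The decisive step is an exact factorization: each of the three products above carries the common factor $x(N-x)$, namely ${x \choose 3}(N-x)=x(N-x)(x-1)(x-2)/6$, $x{N-x \choose 3}=x(N-x)(N-x-1)(N-x-2)/6$, and ${x \choose 2}{N-x \choose 2}=x(N-x)(x-1)(N-x-1)/4$. Cancelling $x(N-x)$ and dividing numerator and denominator by $N^2$ leaves quadratics whose $N\to\infty$ limits are, with $p=x/N$, equal to $(3p^2-p-1)/6$ and $(p^2-p+2)/12$; using $3p^2-p-1=3(p-c_-)(p-c_+)$ from \eqref{eq:Q4} this gives, uniformly in $x\in\{1,\dots,N-1\}$,
$$ E(Z_{n+1}-Z_n \mid Z_n=x) = g(p)+O(N^{-1}), \qquad g(p):=\frac{6(p-c_-)(p-c_+)}{p^2-p+2}. $$
Here the numerator $\tfrac{1}{2}(p-c_-)(p-c_+)$ is a multiple of $Q_4(p)-p$, which explains why Galam's threshold $c_+$ resurfaces. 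The cancellation is exactly what makes the error uniform down to the boundary: the surviving denominator $p^2-p+2\ge 7/4$ stays bounded away from zero, whereas the raw sum $q_1+q_{-1}+q_{-2}$ vanishes as $p\to 0,1$.

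It remains to bound $g$ on the two intervals. I would write $g(p)=(p-c_+)\,h(p)$ with $h(p):=6(p-c_-)/(p^2-p+2)>0$ on $[0,1]$, and note that $h$ is increasing there, since the numerator of $h'$ equals $6(-p^2+2c_-p+2-c_-)$, a concave quadratic that is positive at $p=0$ and at $p=1$ and hence positive on $[0,1]$. For $p\le c_+-\ep$ this yields $g(p)\le -\ep\,h(p)\le -\ep\,h(0)$ with $h(0)=-3c_-=\tfrac{1}{2}(\sqrt{13}-1)$; for $p\ge c_++\ep$ it yields $g(p)\ge \ep\,h(p)\ge \ep\,h(c_+)$ with $h(c_+)=18\sqrt{13}/(20-\sqrt{13})\ge\sqrt{13}$. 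Adding the uniform $O(N^{-1})$ from the previous step produces the two stated bounds. The only real obstacle is the uniformity of the error all the way to the edges of the state space; once the factor $x(N-x)$ is pulled out, the problem collapses to the monotonicity of the single-variable function $h$, which is routine.
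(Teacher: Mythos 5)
Your proposal is correct and follows essentially the same route as the paper: exact transition probabilities $q_1, q_{-1}, q_{-2}$ by sampling without replacement, the drift of $(Z_n)$ as the ratio $(q_1 - q_{-1} - 2q_{-2})/(q_1 + q_{-1} + q_{-2})$, and the factorization $3p^2 - p - 1 = 3(p - c_-)(p - c_+)$ from \eqref{eq:Q4}, arriving at the identical limiting drift $6(p-c_-)(p-c_+)(p^2-p+2)^{-1}$ and the same constants $-3c_- = \tfrac{1}{2}(\sqrt{13}-1)$ and $3(c_+-c_-) = \sqrt{13}$. Your execution is in fact slightly more careful on two points where the paper is terse: the exact cancellation of the common factor $x(N-x)$ at finite $N$, which makes the $O(N^{-1})$ error uniform in $x$ all the way to the boundary (the paper's ratio of estimates at $x = \lfloor cN \rfloor$ implicitly relies on this same cancellation, since numerator and denominator each vanish like $c(1-c)$ near the edges), and the monotonicity of $h(p) = 6(p-c_-)/(p^2-p+2)$ in place of the paper's separate bounds $(c - c_-) \geq -c_-$ and $c^2 - c + 2 \leq 2$.
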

\begin{proof}
 Observing that $q_{-2} (x)$ is the probability that a randomly chosen group of size 4 has two individuals in favor and two individuals
 against the $+ 1$ opinion, we obtain that
 $$ q_{-2} (x) \ = \ {N \choose 4}^{-1} \ \frac{x \,(x - 1)}{2} \ \frac{(N - x)(N - x - 1)}{2} \ = \ 6 \,c^2 \,(1 - c)^2 + O (N^{-1}) $$
 provided $x = \lfloor cN \rfloor$.
 Similarly, we show that
 $$ q_1 (x) \ = \ 4 \,c^3 \,(1 - c) + O (N^{-1}) \quad \hbox{and} \quad q_{-1} (x) \ = \ 4 \,c \,(1 - c)^3 + O (N^{-1}) $$
 from which it follows that
 $$ \begin{array}{rcl}
     q_1 (x) - q_{-1} (x) - 2 \,q_{-2} (x) & = & 4 \,c^3 \,(1 - c) - 4 \,c \,(1 - c)^3 - 12 \,c^2 \,(1 - c)^2 + O (N^{-1}) \vspace*{4pt} \\
                                           & = & 4 \,c \,(1 - c)(c^3 - c - 1) + O (N^{-1}) \vspace*{8pt} \\
     q_1 (x) + q_{-1} (x) + q_{-2} (x)     & = & 4 \,c^3 \,(1 - c) + 4 \,c \,(1 - c)^3 + 6 \,c^2 \,(1 - c)^2 + O (N^{-1}) \vspace*{4pt} \\
                                           & = & 2 \,c \,(1 - c)(c^2 - c + 2) + O (N^{-1}). \end{array} $$
 Taking the ratio of the previous two estimates leads to
 $$ E \,(Z_{n + 1} - Z_n \,| \,Z_n = x) \ = \ 6 \,(c - c_-)(c - c_+)(c^2 - c + 2)^{-1} + O (N^{-1}) $$
 from which we deduce that
 $$ E \,(Z_{n + 1} - Z_n \,| \,Z_n = x) \ \leq \ 3 \,(- c_-)(- \ep) + O (N^{-1}) \ = \ - (1/2)(\sqrt{13} - 1) \,\ep + O (N^{-1}) $$
 for all $x \in (0, (c_+ - \ep) N)$ and
 $$ E \,(Z_{n + 1} - Z_n \,| \,Z_n = x) \ \geq \ 3 \,(c_+ - c_-) \,\ep + O (N^{-1}) \ = \ \sqrt{13} \,\ep + O (N^{-1}) $$
 for all $x \in ((c_+ + \ep) N, N)$.
 This completes the proof.
\end{proof}
\begin{lemma} --
\label{lem:supermartingale}
 There exists $a_0 > 0$ such that
 $$ \begin{array}{rcl}
     E \,(\exp (a_0 Z_{n + 1}) - \exp (a_0 Z_n) \,| \,Z_n = x) \ \leq \ 0 & \hbox{for all} & x \in (0, (c_+ - \ep) N)  \vspace*{4pt} \\
     E \,(\exp (- a_0 Z_{n + 1}) - \exp (- a_0 Z_n) \,| \,Z_n = x) \ \leq \ 0 & \hbox{for all} & x \in ((c_+ + \ep) N, N) \end{array} $$
 for all $N$ sufficiently large.
\end{lemma}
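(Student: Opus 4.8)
The plan is to convert each inequality into a one-step moment-generating-function estimate and read off the sign from Lemma~\ref{lem:drift}. Write $D := Z_{n+1} - Z_n$ for the increment of $(Z_n)$, which by construction takes values in $\{-2,-1,1\}$ with the normalized probabilities $\hat q_j(x) := q_j(x)\,(q_1(x) + q_{-1}(x) + q_{-2}(x))^{-1}$. Since
$$ E \,(\exp (a Z_{n + 1}) - \exp (a Z_n) \,| \,Z_n = x) \ = \ \exp (a x) \,(\phi_x (a) - 1), \qquad \phi_x (a) := E \,(\exp (a D) \,| \,Z_n = x), $$
the first claim is equivalent to $\phi_x (a_0) \leq 1$ on the left region and the second to $\phi_x (- a_0) \leq 1$ on the right region. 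It therefore suffices to exhibit a single $a_0 > 0$ for which these two scalar inequalities hold, uniformly in $x$ and in all large $N$.

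First I would Taylor expand $\phi_x$ about $a = 0$. Because $D$ is bounded, $\phi_x$ is smooth with $\phi_x (0) = 1$, with $\phi_x'(0) = E \,(D \,| \,Z_n = x)$ equal to the drift estimated in Lemma~\ref{lem:drift}, and with $\phi_x''(a) = 4 \,\hat q_{-2}(x) \,e^{- 2 a} + \hat q_{-1}(x) \,e^{- a} + \hat q_1 (x) \,e^{a}$ bounded by a constant $C$ (say $C = 4 e$) uniformly in $x$ and $N$ for $a \in [0, 1]$, since the $\hat q_j$ are probabilities. Writing $\phi_x (a) = 1 + a \,E \,(D \,| \,Z_n = x) + \tfrac{1}{2} a^2 \phi_x''(\xi)$ for some $\xi \in (0, a)$ and invoking the bound $E \,(D \,| \,Z_n = x) \leq - \tfrac{1}{2}(\sqrt{13} - 1) \,\ep + O (N^{-1})$ valid for $x \in (0, (c_+ - \ep) N)$, I would take $N$ large enough that the error term is negligible and the drift is at most $- \delta$ for a fixed $\delta > 0$; then $\phi_x (a_0) \leq 1 - a_0 \delta + \tfrac{1}{2} a_0^2 C \leq 1$ as soon as $a_0 \leq \min (1, 2 \delta / C)$.

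The second inequality follows from the mirror-image computation: applying the same expansion to $a \mapsto \phi_x (- a)$, whose derivative at $0$ equals $- E \,(D \,| \,Z_n = x) < 0$ on the right region by the positive drift bound $E \,(D \,| \,Z_n = x) \geq \sqrt{13} \,\ep + O (N^{-1})$, and whose second derivative is again bounded by the same universal constant, yields $\phi_x (- a_0) \leq 1$ for $a_0$ below an analogous threshold. Taking $a_0$ to be the minimum of the two thresholds produces a single positive constant that works for both regions, completing the proof.

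The hard part will be ensuring that every estimate is uniform in $x$ across each region and in $N$. Two points need care: the drift bound of Lemma~\ref{lem:drift} must come with an $O (N^{-1})$ error that is uniform in $x$, so that one choice of large $N$ suffices simultaneously; and one must verify that the normalized drift stays bounded away from zero even as $x / N \to 0$. The latter is exactly what makes the argument go through, and it holds because the common factor $c \,(1 - c)$ cancels between the numerator and denominator of the drift, leaving the ratio $6 \,(c - c_-)(c - c_+)(c^2 - c + 2)^{-1}$, which is continuous and strictly negative on $[0, c_+ - \ep]$ and strictly positive on $[c_+ + \ep, 1]$. Once this uniform separation from zero is in hand, the boundedness of the increments makes the second-order term negligible and the Taylor argument closes at once.
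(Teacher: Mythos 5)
Your proof is correct and follows essentially the same strategy as the paper: both exploit the drift bounds of Lemma \ref{lem:drift} to show that a small exponential tilting of $(Z_n)$ has nonpositive one-step increments on each region. If anything, your version is more careful than the paper's on the key point you flag: where the paper deduces from $\phi_x (0) = 0$ and $\phi_x' (0) < 0$ that some $a_+ > 0$ works ``for all $x \in (0, (c_+ - \ep) N)$'' without justifying that this threshold can be chosen uniformly in $x$ and $N$, your second-order Taylor bound with a universal constant $C$ (available precisely because the increments lie in $\{-2, -1, 1\}$ and the $\hat q_j$ are probabilities) supplies exactly that uniformity --- modulo the harmless detail that on $[0,1]$ the mirrored map $a \mapsto \phi_x (-a)$ needs the slightly larger constant $4 e^2 + e + 1$ rather than $4e$, which is absorbed by enlarging $C$.
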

\begin{proof}
 To begin with, we define the functions
 $$ \phi_x (a) \ := \ E \,(\exp (a Z_{n + 1}) - \exp (a Z_n) \,| \,Z_n = x). $$
 Differentiating then applying Lemma \ref{lem:drift}, we obtain
 $$ \begin{array}{rcl}
    \phi_x' (a) & = & E \,(Z_{n + 1} \,\exp (a Z_{n + 1}) - Z_n \,\exp (a Z_n) \,| \,Z_n = x) \vspace*{4pt} \\
    \phi_x' (0) & = & E \,(Z_{n + 1} - Z_n \,| \,Z_n = x) \ \leq \ - (1/2)(\sqrt{13} - 1) \,\ep + O (N^{-1}) \ < \ 0 \end{array} $$
 for all $x \in (0, (c_+ - \ep) N)$ and $N$ large.
 Since $\phi_x (0) = 0$, there is $a_+ > 0$ such that
 $$ \phi_x (a) \ \leq \ 0 \quad \hbox{for all} \ a \in (0, a_+) \ \hbox{and all} \ x \in (0, (c_+ - \ep) N). $$
 Differentiating $a \mapsto \phi_x (- a)$ and using Lemma \ref{lem:drift}, we also have
 $$ \phi_x (- a) \ \leq \ 0 \quad \hbox{for all} \ a \in (0, a_-) \ \hbox{and all} \ x \in ((c_+ + \ep) N, N) $$
 for some $a_- > 0$.
 In particular, for $a_0 := \min (a_+, a_-) > 0$,
 $$ \begin{array}{rcl}
      \phi_x (a_0) \ \leq \ 0 & \hbox{for all} & x \in (0, (c_+ - \ep) N) \vspace*{2pt} \\
    \phi_x (- a_0) \ \leq \ 0 & \hbox{for all} & x \in ((c_+ + \ep) N, N) \end{array} $$
 which, recalling the definition of $\phi_x$, is exactly the statement of the lemma.
\end{proof} \\ \\
 With Lemma \ref{lem:supermartingale} in hands, we are now ready to prove the upper and lower bounds for the winning probabilities
 using the optimal stopping theorem.
\begin{lemma} --
\label{lem:stopping-1}
 For all $\ep > 0$,
 $$ p_x (N, 4) \ \leq \ \exp (- a_0 \ep N) \quad \hbox{for all $N$ large and $x \in (0, (c_+ - 2 \ep) N)$}. $$
\end{lemma}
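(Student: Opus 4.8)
The plan is to run the supermartingale $M_n := \exp(a_0 Z_n)$ furnished by Lemma \ref{lem:supermartingale} against the optional stopping theorem. Since that lemma only guarantees the one-step inequality $\phi_x(a_0) \leq 0$ while $Z_n$ stays strictly below $(c_+ - \ep)N$, I would first localize to the region where it applies by introducing the exit time
$$ \tau \ := \ \inf \{n \geq 0 : Z_n \leq 0 \ \hbox{or} \ Z_n \geq (c_+ - \ep) N \}. $$
Because $(Z_n)$ lives on the finite state space $\{0, 1, \ldots, N\}$ and has strictly negative drift throughout $(0, (c_+ - \ep)N)$ by Lemma \ref{lem:drift}, the time $\tau$ is almost surely finite, and the stopped process $(M_{n \wedge \tau})$ is a bounded supermartingale: for $n < \tau$ the state $Z_n$ lies strictly inside $(0, (c_+ - \ep)N)$, so $E \,(M_{n+1} - M_n \,|\, Z_n) = \phi_{Z_n}(a_0) \leq 0$, while for $n \geq \tau$ the process is frozen.

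The key geometric observation is that the upward increments of $(Z_n)$ all equal $+1$, so the chain cannot jump over any integer level as it climbs. Consequently, starting from $x < (c_+ - \ep)N$, the chain can only reach the absorbing state $N$ after first visiting a value $\geq (c_+ - \ep)N$, i.e. after exiting the region through its upper boundary. Since $0$ is also absorbing, the event that opinion $+1$ wins is contained in $\{Z_\tau \geq (c_+ - \ep)N\}$, giving
$$ p_x (N, 4) \ = \ P \,(Z_n = N \ \hbox{for some} \ n > 0) \ \leq \ P \,(Z_\tau \geq (c_+ - \ep) N). $$

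It then remains to bound the right-hand side. Applying the optional stopping theorem to the bounded supermartingale $(M_{n \wedge \tau})$ yields $E \,(\exp (a_0 Z_\tau)) \leq \exp (a_0 x)$, and restricting the left-hand expectation to the event $\{Z_\tau \geq (c_+ - \ep)N\}$ gives
$$ \exp (a_0 (c_+ - \ep) N) \ P \,(Z_\tau \geq (c_+ - \ep) N) \ \leq \ E \,(\exp (a_0 Z_\tau)) \ \leq \ \exp (a_0 x). $$
Rearranging and using $x < (c_+ - 2\ep)N$, so that $x - (c_+ - \ep)N < -\ep N$, I would conclude
$$ p_x (N, 4) \ \leq \ P \,(Z_\tau \geq (c_+ - \ep)N) \ \leq \ \exp (a_0 (x - (c_+ - \ep)N)) \ \leq \ \exp (- a_0 \ep N), $$
which is the asserted bound. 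The main point to check with care is the justification of optional stopping — the almost-sure finiteness of $\tau$ together with the boundedness of $M_{n \wedge \tau}$, both of which follow from the finiteness of the state space — and the elementary but essential fact that, because upward jumps have size one, hitting $N$ forces an earlier upcrossing of the level $(c_+ - \ep)N$; everything else is routine algebra.
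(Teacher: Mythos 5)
Your proposal is correct and follows essentially the same route as the paper: the same supermartingale $\exp(a_0 Z_n)$ from Lemma \ref{lem:supermartingale}, the same exit time (your $\tau$ coincides with the paper's $T_- = \min(\tau_0, \tau_-)$), and the same application of the optional stopping theorem. The only difference is cosmetic bookkeeping at the end — you bound $P(Z_\tau \geq (c_+ - \ep)N)$ by a direct Markov-type inequality, whereas the paper decomposes $E(\exp(a_0 Z_{T_-}))$ over the two exit events before rearranging — and your remark about size-one upward jumps, while harmless, is not even needed since hitting $N$ itself already crosses the level $(c_+ - \ep)N$.
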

\begin{proof}
 First, we introduce the stopping times
 $$ \tau_0 \ := \ \inf \,\{n : Z_n = 0 \} \quad \hbox{and} \quad \tau_- \ := \ \inf \,\{n : Z_n > (c_+ - \ep) N \} $$
 as well as $T_- := \min (\tau_0, \tau_-)$.
 Since the process $\exp (a_0 Z_n)$ stopped at time $T_-$ is a supermartingale according to the first assertion in
 Lemma \ref{lem:supermartingale} and the stopping time $T_-$ is almost surely finite, the optimal stopping theorem implies that
\begin{equation}
\label{eq:stopping-1}
  E \,(\exp (a_0 Z_{T_-}) \,| \,Z_0 = x) \ \leq \ E \,(\exp (a_0 Z_0) \,| \,Z_0 = x) \ \leq \ \exp (a_0 (c_+ - 2 \ep) N)
\end{equation}
 for all $x \in (0, (c_+ - 2 \ep) N)$.
 In addition,
\begin{equation}
\label{eq:stopping-2}
 \begin{array}{rcl}
  E \,(\exp (a_0 Z_{T_-})) & = &
  E \,(\exp (a_0 Z_{T_-}) \,| \,T_- = \tau_0) \,P \,(T_- = \tau_0) \vspace*{4pt} \\ && \hspace*{50pt} + \ E \,(\exp (a_0 Z_{T_-}) \,| \,T_- = \tau_-) \,P \,(T_- = \tau_-) \vspace*{4pt} \\ & \geq &
  P \,(T_- = \tau_0) \ + \ \exp (a (c_+ - \ep) N) \,P \,(T_- \neq \tau_0) \vspace*{4pt} \\ & = &
  1 \ - \ (1 - \exp (a_0 (c_+ - \ep) N)) \,P \,(T_- \neq \tau_0).
\end{array}
\end{equation}
 Noticing that opinion +1 wins only if $T_- \neq \tau_0$ and combining \eqref{eq:stopping-1}--\eqref{eq:stopping-2}, we get
 $$ \begin{array}{rcl}
     p_x (N, 4) & \leq & P \,(T_- \neq \tau_0) \ \leq \ (\exp (a_0 (c_+ - 2 \ep) N) - 1)(\exp (a_0 (c_+ - \ep) N) - 1)^{-1} \vspace*{4pt} \\
                & \leq &   \exp (a_0 (c_+ - 2 \ep) N) \,(\exp (a_0 (c_+ - \ep) N))^{-1} \ = \ \exp (- a_0 \ep N). \end{array} $$
 for all $x \in (0, (c_+ - 2 \ep) N)$ and all $N$ sufficiently large.
\end{proof}
\begin{lemma} --
\label{lem:stopping-2}
 For all $\ep > 0$,
 $$ p_x (N, 4) \ \geq \ 1 - \exp (- a_0 \ep N) \quad \hbox{for all $N$ large and $x \in ((c_+ + 2 \ep) N, N)$}. $$
\end{lemma}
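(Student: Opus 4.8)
The plan is to mirror the argument of Lemma \ref{lem:stopping-1}, this time exploiting the \emph{second} supermartingale produced by Lemma \ref{lem:supermartingale}. First I would introduce the stopping times
$$ \tau_N \ := \ \inf \,\{n : Z_n = N \} \quad \hbox{and} \quad \tau_+ \ := \ \inf \,\{n : Z_n < (c_+ + \ep) N \} $$
together with $T_+ := \min (\tau_N, \tau_+)$. By the second assertion in Lemma \ref{lem:supermartingale}, the one-step inequality $E \,(\exp (- a_0 Z_{n + 1}) - \exp (- a_0 Z_n) \,| \,Z_n = x) \leq 0$ holds throughout the region $x \in ((c_+ + \ep) N, N)$, which is precisely the set of values occupied by the chain strictly before time $T_+$. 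Hence the process $\exp (- a_0 Z_{n \wedge T_+})$ is a supermartingale, and since $T_+$ is almost surely finite, the optimal stopping theorem yields
$$ E \,(\exp (- a_0 Z_{T_+}) \,| \,Z_0 = x) \ \leq \ \exp (- a_0 x) \ \leq \ \exp (- a_0 (c_+ + 2 \ep) N) $$
for every $x \in ((c_+ + 2 \ep) N, N)$, the last inequality using $a_0 > 0$ and $x > (c_+ + 2 \ep) N$.

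Next I would decompose the left-hand side according to the two ways the chain can stop. On the event $\{T_+ = \tau_+\}$ one has $Z_{T_+} < (c_+ + \ep) N$, so $\exp (- a_0 Z_{T_+}) > \exp (- a_0 (c_+ + \ep) N)$; discarding the nonnegative contribution coming from $\{T_+ = \tau_N\}$ gives
$$ E \,(\exp (- a_0 Z_{T_+})) \ \geq \ \exp (- a_0 (c_+ + \ep) N) \ P \,(T_+ = \tau_+). $$
Combining this with the previous display and cancelling the exponential factor produces the crucial bound $P \,(T_+ = \tau_+) \leq \exp (- a_0 \ep N)$.

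Finally I would translate this into the claimed estimate for $p_x (N, 4)$. Since $0$ and $N$ are the only absorbing states of $(Z_n)$, opinion $+1$ fails to win exactly when the chain is absorbed at $0$, so $p_x (N, 4) = 1 - P \,(Z_n = 0 \ \hbox{for some} \ n)$. Starting from $x > (c_+ + 2 \ep) N$, the chain cannot reach $0$ without first taking a value below $(c_+ + \ep) N$, and on that event it never hits $N$; therefore $\{Z_n = 0 \ \hbox{for some} \ n\} \subseteq \{\tau_+ < \tau_N\} = \{T_+ = \tau_+\}$, whence $P \,(Z_n = 0 \ \hbox{for some} \ n) \leq P \,(T_+ = \tau_+) \leq \exp (- a_0 \ep N)$. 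This gives $p_x (N, 4) \geq 1 - \exp (- a_0 \ep N)$ for all $N$ large, as desired. The only delicate points — entirely parallel to the subtlety in Lemma \ref{lem:stopping-1} — are the event inclusion $\{Z_n = 0 \ \hbox{for some} \ n\} \subseteq \{T_+ = \tau_+\}$ and the fact that the supermartingale property is guaranteed only on $((c_+ + \ep) N, N)$, which is exactly why one stops at $\tau_+$ rather than waiting for absorption at $0$.
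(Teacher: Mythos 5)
Your proposal is correct and follows essentially the same route as the paper: the same stopping times $\tau_N$, $\tau_+$, $T_+$, the same supermartingale $\exp(-a_0 Z_n)$ from Lemma \ref{lem:supermartingale}, and the same application of the optimal stopping theorem. The only (cosmetic) difference is at the end: the paper keeps both terms of the decomposition and solves for a lower bound on $P(T_+ = \tau_N)$, whereas you discard the nonnegative $\{T_+ = \tau_N\}$ contribution to bound $P(T_+ = \tau_+) \leq \exp(-a_0 \ep N)$ directly and then use the complementary inclusion $\{\hbox{opinion $+1$ loses}\} \subseteq \{T_+ = \tau_+\}$ --- a slightly cleaner piece of algebra that is logically equivalent given that $T_+$ is almost surely finite.
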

\begin{proof}
 This is similar to the proof of Lemma \ref{lem:stopping-1}.
 Let $T_+ := \min (\tau_N, \tau_+)$ where
 $$ \tau_N \ := \ \inf \,\{n : Z_n = N \} \quad \hbox{and} \quad \tau_+ \ := \ \inf \,\{n : Z_n < (c_+ + \ep) N \} $$
 and apply Lemma \ref{lem:supermartingale} and the optimal stopping theorem to obtain
\begin{equation}
\label{eq:stopping-3}
  E \,(\exp (- a_0 Z_{T_+}) \,| \,Z_0 = x) \ \leq \ E \,(\exp (- a_0 Z_0) \,| \,Z_0 = x) \ \leq \ \exp (- a_0 (c_+ + 2 \ep) N)
\end{equation}
 for all $x \in ((c_+ + 2 \ep) N, N)$.
 Moreover,
\begin{equation}
\label{eq:stopping-4}
 \begin{array}{l}
  E \,(\exp (- a_0 Z_{T_+})) \ = \
  E \,(\exp (- a_0 Z_{T_+}) \,| \,T_+ = \tau_N) \,P \,(T_+ = \tau_N) \vspace*{4pt} \\ \hspace*{125pt} + \ E \,(\exp (- a_0 Z_{T_+}) \,| \,T_+ = \tau_+) \,P \,(T_+ = \tau_+) \vspace*{4pt} \\ \hspace*{50pt} \geq \
    \exp (- a_0 N) \,P \,(T_+ = \tau_N) + \exp (- a_0 (c_+ + \ep) N) \,P \,(T_+ \neq \tau_N) \vspace*{4pt} \\ \hspace*{50pt} = \
    \exp (- a_0 (c_+ + \ep) N) + \ (\exp (- a_0 N) - \exp (- a_0 (c_+ + \ep) N)) \,P \,(T_+ = \tau_N). \end{array}
\end{equation}
 Combining \eqref{eq:stopping-3}--\eqref{eq:stopping-4} and using that opinion +1 wins if $T_+ = \tau_N$, we get
 $$ \begin{array}{rcl}
     p_x (N, 4) & \geq & P \,(T_+ = \tau_N) \vspace*{4pt} \\
                & \geq & (\exp (- a_0 (c_+ + \ep) N) - \exp (- a_0 (c_+ + 2 \ep) N)) \vspace*{4pt} \\ && \hspace*{80pt} (\exp (- a_0 (c_+ + \ep) N)- \exp (- a_0 N))^{-1} \vspace*{4pt} \\
                & \geq & (\exp (- a_0 (c_+ + \ep) N) - \exp (- a_0 (c_+ + 2 \ep) N)) \,\exp (a_0 (c_+ + \ep) N) \vspace*{4pt} \\
                & = & 1 - \exp (- a_0 \ep N) \end{array} $$
 for all $x \in ((c_+ + 2 \ep) N, N)$ and all $N$ sufficiently large.
\end{proof} \\


\noindent {\bf Proportional rule} --
 We now prove Theorem \ref{th:debate-proportional}, which deals with the non-spatial public debate model under the proportional rule.
 To begin with, we introduce the transition probabilities
 $$ \begin{array}{rcll}
     r_j (x) & := & P \,(X_{n + 1} - X_n = j \ | \,X_n = x)   & \hbox{for all} \ \ j \geq 0 \vspace*{4pt} \\
     l_j (x) & := & P \,(X_{n + 1} - X_n = - j \ | \,X_n = x) & \hbox{for all} \ \ j \geq 0 \end{array} $$
 where $r$ and $l$ stand for right and left, respectively.
 As previously, a first-step analysis does not allow to find an explicit expression for the winning probabilities, but the result
 can be deduced from the optimal stopping theorem observing that the number of individuals in favor of a given opinion is a martingale
 with respect to the natural filtration of the process.
\begin{lemma} --
\label{lem:debate-proportional}
 Under the proportional rule, we have $p_x (N, s) = x/N$.
\end{lemma}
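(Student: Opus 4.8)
The plan is to exploit the martingale structure of the proportional rule, exactly as announced in the excerpt. First I would show that the process $(X_n)$ counting the number of individuals holding opinion~$+1$ is a martingale with respect to its natural filtration. Given $X_n = x$, a discussion group $B_s$ is chosen uniformly at random; if this group happens to contain $k$ supporters of opinion~$+1$, then under the proportional rule the whole group adopts opinion~$+1$ with probability $k/s$, increasing the count by $s - k$, and adopts opinion~$-1$ with probability $(s - k)/s$, decreasing the count by $k$. The conditional expected increment, given that the selected group has $k$ supporters of $+1$, is therefore
$$ \frac{k}{s} \,(s - k) \ + \ \frac{s - k}{s} \,(- k) \ = \ 0. $$
Averaging over the uniform choice of the group gives $E \,(X_{n + 1} - X_n \,| \,X_n = x) = 0$, which in the notation introduced above amounts to $\sum_{j \geq 1} j \,r_j (x) = \sum_{j \geq 1} j \,l_j (x)$. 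Hence $(X_n)$ is a martingale.

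Next, I would observe that the states $0$ and $N$ are absorbing (they correspond to consensus) and that they are the only absorbing states: whenever $0 < x < N$, there is positive probability of selecting a group containing supporters of both opinions, and then positive probability of a strictly positive or strictly negative increment. Since $(X_n)$ is a bounded, integer-valued martingale, the martingale convergence theorem implies that it converges almost surely, and a convergent integer-valued sequence is eventually constant; the only possible limiting values are the absorbing states. Consequently the absorption time $\tau := \inf \,\{n : X_n \in \{0, N \}\}$ is almost surely finite.

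Finally, I would apply the optimal stopping theorem. Because $(X_n)$ is a bounded martingale and $\tau < \infty$ almost surely, we have $E \,(X_\tau \,| \,X_0 = x) = x$. Since $X_\tau \in \{0, N \}$ and opinion~$+1$ wins precisely when $X_\tau = N$,
$$ x \ = \ E \,(X_\tau \,| \,X_0 = x) \ = \ N \cdot P \,(X_\tau = N \,| \,X_0 = x) \ = \ N \,p_x (N, s), $$
which yields $p_x (N, s) = x/N$ for every $s > 1$.

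The only delicate point is verifying the hypotheses for optional stopping, namely that $\tau$ is almost surely finite; I expect this to be the main obstacle, though it is handled cleanly by combining the boundedness and integer-valuedness of the martingale with the absence of interior absorbing states. The martingale identity itself, while conceptually the heart of the argument, reduces to the one-line cancellation displayed above and holds regardless of the group size $s$.
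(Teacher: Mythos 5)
Your argument is correct and has the same architecture as the paper's proof: show that $(X_n)$ is a bounded martingale, check that the absorption time $T$ is almost surely finite, and apply the optional stopping theorem to read off $x = N \, p_x (N, s)$. The one genuine difference is how you verify the zero drift. The paper computes the transition probabilities explicitly,
$$ r_j (x) \ = \ {N \choose s}^{-1} {x \choose s - j} {N - x \choose j} \Big(\frac{s - j}{s}\Big) \qquad \hbox{and} \qquad l_j (x) \ = \ {N \choose s}^{-1} {x \choose j} {N - x \choose s - j} \Big(\frac{s - j}{s}\Big), $$
and establishes $\sum_j j \, r_j (x) = \sum_j j \, l_j (x)$ via the index reversal $j \mapsto s - j$, which requires first checking that the index sets satisfy $J_x = s - I_x$. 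You instead condition on the number $k$ of $+1$ supporters in the selected group, where the expected increment $(k/s)(s - k) - ((s - k)/s) \, k$ vanishes termwise, and then average over the choice of group by the tower property. Your computation is more elementary --- no hypergeometric bookkeeping, no index-set matching --- and it makes transparent that the martingale property holds for every $s$ and indeed for any distribution used to sample the group, whereas the paper's verification is tied to uniform sampling. You also supply something the paper omits: a proof that $T$ is almost surely finite (bounded integer-valued martingale converges a.s., hence is eventually constant, and the chain cannot freeze at an interior state since from each $0 < x < N$ it moves with probability bounded below, the bound being a minimum over finitely many states), where the paper merely asserts this. Both proofs are complete modulo that assertion; yours is, if anything, the tighter of the two.
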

\begin{proof}
 Since an update can only result in an increase of the number of +1 individuals by $j$ if the discussion group selected
 has exactly $s - j$ individuals in favor of +1, we have
\begin{equation}
\label{eq:right}
  r_j (x) \ = \ {N \choose s}^{-1} {x \choose s - j} {N - x \choose j} \bigg(\frac{s - j}{s} \bigg)
\end{equation}
 for all $j \in I_x := \{\max (0, s - x), \ldots, \min (s, N - x) \}$. Similarly,
\begin{equation}
\label{eq:left}
  l_j (x) \ = \ {N \choose s}^{-1} {x \choose j} {N - x \choose s - j} \bigg(\frac{s - j}{s} \bigg)
\end{equation}
 for all $j \in J_x := \{\max (0, s - (N - x)), \ldots, \min (s, x) \}$. Now, let
 $$ m (x) \ := \ \max (0, s - x) \quad \hbox{and} \quad M (x) \ := \ \min (s, x) $$
 and observe that
 $$ \begin{array}{rcl}
     s - m (x) & = & s - \max (0, s - x) \ = \ s + \min (0, x - s) \ = \ \min (s, x) \ = \ M (x) \vspace*{4pt} \\
     s - M (N - x) & = & s - \min (s, N - x) \ = \ s + \max (- s, - (N - x)) \vspace*{2pt} \\ & = & \max (0, s - (N - x)) \ = \ m (N - x). \end{array} $$
 This shows that $J_x = s - I_x$ for all $x$.
 In particular, using the transformation $j \mapsto s - j$ and recalling the expression of the two conditional probabilities \eqref{eq:right}--\eqref{eq:left}, we obtain
 $$ \begin{array}{rcl}
    \displaystyle \sum_{j \in I_x} \  j \,r_j (x) & = &
    \displaystyle \sum_{j \in I_x} \  j \ {N \choose s}^{-1} {x \choose s - j} {N - x \choose j} \bigg(\frac{s - j}{s} \bigg) \vspace*{4pt} \\ & = &
    \displaystyle \sum_{j \in J_x} \ (s - j) \ {N \choose s}^{-1} {x \choose j} {N - x \choose s - j} \bigg(\frac{j}{s} \bigg) \vspace*{4pt} \\ & = &
    \displaystyle \sum_{j \in J_x} \  j \ {N \choose s}^{-1} {x \choose j} {N - x \choose s - j} \bigg(\frac{s - j}{s} \bigg) \ = \
    \displaystyle \sum_{j \in J_x} \  j \,l_j (x) \end{array} $$
 which gives the conditional expectation
 $$ E \,(X_{n + 1} - X_n \ | \,X_n = x) \ = \
      \displaystyle \sum_{j \in I_x} \ j \,r_j (x) \ - \ \sum_{j \in J_x} \ j \,l_j (x) \ = \ 0. $$
 This shows that the process $(X_n)$ is a martingale. Now, let
 $$ T_+ \ := \ \inf \,\{n : X_n = N \} \quad \hbox{and} \quad T_- \ := \ \inf \,\{n : X_n = 0 \} $$
 and observe that the stopping time $T := \min (T_+, T_-)$ is almost surely finite.
 Since in addition the process is bounded, an application of the optimal stopping theorem implies that
 $$ \begin{array}{rcl}
     E \,(X_T \,| \,X_0 = x) & = &
     E \,(X_0 \,| \,X_0 = x) \ = \ x \vspace*{4pt} \\ & = &
     N \times P \,(T = T_+) + 0 \times P \,(T = T_-) \ = \ N \,p_x (N, s) \end{array} $$
 from which it follows that $p_x (N, s) = x / N$.
\end{proof}


\section{Proof of Theorem \ref{th:spatial-debate} (spatial public debate model)}
\label{sec:space}

\indent To conclude, we study the spatial version of the public debate model introduced in \cite{lanchier_neufer_2013} but replacing
 the majority rule with the proportional rule.
 The key to our analysis is similar to the approach used in previous works \cite{clifford_sudbury_1973, holley_liggett_1975} about
 the voter model.
 The idea is to construct the process from a so-called Harris' graphical representation and then use the resulting graphical
 structure to exhibit a relationship between the process and a system of coalescing random walks. \vspace*{5pt}

\noindent {\bf Graphical representations} --
 We first give a possible graphical representation from which the spatial public debate model can be constructed starting
 from any initial configuration.
 Though natural, this graphical representation does not allow to derive a useful duality relationship between the process and
 coalescing random walks.
 We then introduce an alternative way to construct the process leading to such a duality relationship.
 Recall that
 $$ \{x + B_s : x \in \Z^d \} \quad \hbox{where} \quad B_s := \{0, 1, \ldots, s - 1 \}^d $$
 represents the collection of discussion groups.
 Each of these groups is updated in continuous time at rate one, i.e., at the arrival times of independent Poisson processes with
 intensity one.
 In addition, since the new opinion of the group after an update is chosen to be +1 with probability the fraction of +1 individuals
 in the group just before the update, the new opinion can be determined by comparing the fraction of +1 with a uniform random
 variable over the unit interval.
 In particular, a natural way to construct the spatial public debate model graphically is to
\begin{itemize}
 \item let $T_n (x) :=$ the $n$th arrival time of a Poisson process with rate one and \vspace*{4pt}
 \item let $U_n (x) :=$ a uniform random variable over the interval $(0, 1)$
\end{itemize}
 for all $x \in \Z^d$ and $n > 0$.
 At time $t := T_n (x)$, all the individuals in $x + B_s$ are simultaneously updated as a result of a discussion and we set
\begin{equation}
\label{eq:rule-1}
  \begin{array}{l} \eta_t (y) \ := \ 2 \times \ind \{U_n (x) < s^{-d} \ \sum_{z \in x + B_s} \ind \{\eta_{t-} (z) = +1 \} \} - 1 \quad \hbox{for all} \quad y \in x + B_s \end{array}
\end{equation}
 while the configuration outside $x + B_s$ stays unchanged.
 An idea of Harris \cite{harris_1972} implies that the process starting from any initial configuration can be constructed using
 this rule.
 We now construct another process $(\xi_t)$ with the same state space as follows:
 the times at which individuals in the same discussion group interact are defined as above from the same collection of independent
 Poisson processes, but to determine the outcome of the discussion we now
\begin{itemize}
 \item let $W_n (x) :=$ a uniform random variable over the set $x + B_s$
\end{itemize}
 for all $x \in \Z^d$ and $n > 0$.
 At time $t := T_n (x)$, all the individuals in $x + B_s$ are simultaneously updated as a result of a discussion and we set
\begin{equation}
\label{eq:rule-2}
  \begin{array}{l} \xi_t (y) \ := \ \xi_{t-} (W_n (x)) \quad \hbox{for all} \quad y \in x + B_s \end{array}
\end{equation}
 while the configuration outside $x + B_s$ stays unchanged.
 The next lemma, whose proof is simply based on a re-writing of events under consideration, shows that both
 rules \eqref{eq:rule-1}--\eqref{eq:rule-2} define in fact the same process:
 the processes $(\eta_t)$ and $(\xi_t)$ are stochastically equal.
\begin{lemma} --
\label{lem:equivalent}
 Both constructions \eqref{eq:rule-1}--\eqref{eq:rule-2} are equivalent:
 $$ \eta_{t-} = \,\xi_{t-} \quad \hbox{implies} \quad P \,(\eta_t (x) = 1) \ = \ P \,(\xi_t (x) = 1) \ \ \hbox{for all} \ \ x \in \Z^d. $$
\end{lemma}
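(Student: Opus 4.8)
The plan is to exploit the fact that both update rules act trivially outside the updated group and, inside the group, overwrite every site with a single common opinion; so it suffices to show that this common opinion equals $+1$ with the same probability under \eqref{eq:rule-1} and under \eqref{eq:rule-2}. Fix an update time $t = T_n(x)$ at which the group $x + B_s$ is updated, and assume $\eta_{t-} = \xi_{t-}$. For a site $y \notin x + B_s$, neither rule changes the opinion, so $\eta_t(y) = \eta_{t-}(y) = \xi_{t-}(y) = \xi_t(y)$ and the two marginals coincide trivially. The only case requiring work is $y \in x + B_s$, and there both rules assign $y$ the same value as every other site of the group, so I only need to compute the probability that this common value is $+1$ under each construction.

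For rule \eqref{eq:rule-1}, I would introduce the fraction $f := s^{-d} \sum_{z \in x + B_s} \ind \{\eta_{t-}(z) = +1\}$ of $+1$ supporters in the group before the update, and observe that $\eta_t(y) = +1$ holds precisely when $U_n(x) < f$; since $U_n(x)$ is uniform on $(0,1)$ and $f \in [0,1]$, this gives $P(\eta_t(y) = +1) = f$. For rule \eqref{eq:rule-2}, $\xi_t(y) = +1$ holds precisely when $\xi_{t-}(W_n(x)) = +1$, and because $W_n(x)$ is uniform over the $s^d$ sites of $x + B_s$ this event has probability $s^{-d} \sum_{z \in x + B_s} \ind \{\xi_{t-}(z) = +1\}$, which is exactly the same $f$ once one invokes the hypothesis $\xi_{t-} = \eta_{t-}$. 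Combining the two computations yields $P(\eta_t(y) = +1) = f = P(\xi_t(y) = +1)$, which is the assertion of the lemma.

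As the paper anticipates, the argument is purely a re-writing of events, so there is no genuine analytic obstacle; the only points demanding care are matching the indicator encoding $2 \times \ind \{\cdots\} - 1$ in \eqref{eq:rule-1} to the event $\{\eta_t(y) = +1\}$, and checking that $f$ indeed lies in $[0,1]$ so that $P(U_n(x) < f) = f$ is an exact identity rather than an inequality. The substance of the lemma is that two superficially different randomizations—thresholding a uniform variable against the $+1$-fraction of the group, versus copying the opinion of a uniformly chosen member of the group—produce the same Bernoulli$(f)$ outcome. This one-step agreement of the conditional update laws, together with the fact that $(\eta_t)$ and $(\xi_t)$ read off the same Poisson clocks $\{T_n(x)\}$ and modify only the local group at each arrival, is precisely what underlies the claimed stochastic equality of the two processes.
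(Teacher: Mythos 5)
Your proposal is correct and follows essentially the same route as the paper: both reduce to the nontrivial case of a site inside the updated group and show, via the uniformity of $U_n$ and of $W_n$, that the common new opinion is $+1$ with probability equal to the fraction $f$ of $+1$ supporters in the group under either rule. The paper organizes this as a single chain of equalities starting from $P(\xi_t(x)=1)$ and ending at $P(\eta_t(x)=1)$, whereas you compute the two probabilities separately and match them at $f$, but the content is identical.
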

\begin{proof}
 This is only nontrivial for pairs $(x, t) \in \Z^d \times \R_+$ such that
 $$ t \ := \ T_n (z) \ \ \hbox{and} \ \ x \in z + B_s \quad \hbox{for some} \quad (z, n) \in \Z^d \times \N^*. $$
 In this case, we have
 $$ \begin{array}{rcl}
     P \,(\xi_t (x) = 1) & = &
     P \,(\xi_{t-} (W_n (z)) = 1) \vspace*{4pt} \\ & = &
     P \,(W_n (z) \in \{y \in z + B_s : \xi_{t-} (y) = 1 \}) \vspace*{4pt} \\ & = &
       \card \{y \in z + B_s : \xi_{t-} (y) = 1 \} / \card (z + B_s) \vspace*{4pt} \\ & = &
        s^{-d} \ \sum_{y \in z + B_s} \,\ind \{\xi_{t-} (y) = 1 \} \vspace*{4pt} \\ & = &
     P \,(U_n (z) < s^{-d} \ \sum_{y \in z + B_s} \,\ind \{\xi_{t-} (y) = 1 \}) \vspace*{4pt} \\ & = &
     P \,(U_n (z) < s^{-d} \ \sum_{y \in z + B_s} \,\ind \{\eta_{t-} (y) = 1 \}) \ = \ P \,(\eta_t (x) = 1). \end{array} $$
 This completes the proof of the lemma.
\end{proof} \\

\begin{figure}[t!]
\centering
\includegraphics[width=360pt]{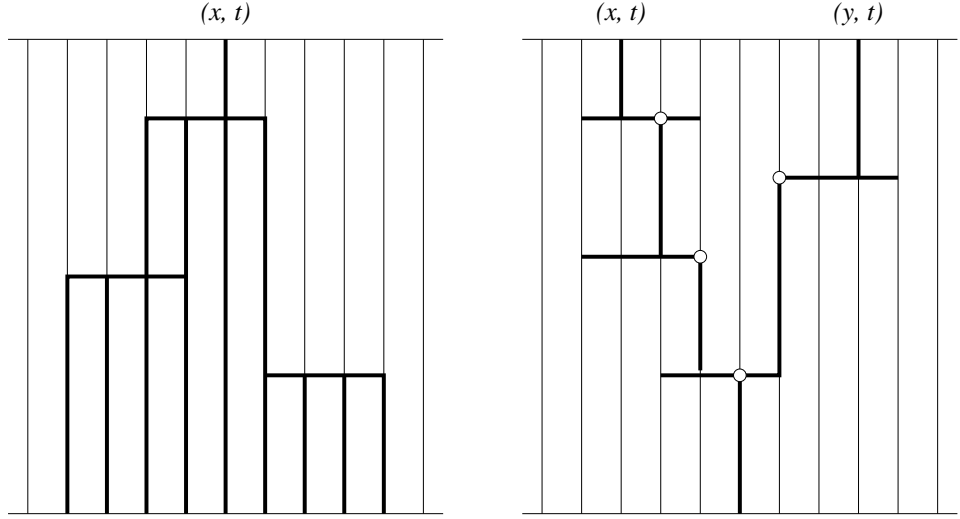}
\caption{\upshape{Picture of the graphical representation and set of ancestors.
 In both pictures, $s = 4$ and the times are which discussion groups are updated (time goes up) are represented by horizontal line segments
 while the set of ancestors is represented by vertical line segments.
 The left-hand and right-hand pictures give respectively an illustration of the set-valued process \eqref{eq:dual-eta} and an illustration
 of the dual process \eqref{eq:dual-xi} starting from $A = \{x, y \}$.
 The open circles on the right-hand side correspond to the value of the uniform $W$ random variables.}}
\label{fig:dual}
\end{figure}


\noindent {\bf Duality with coalescing random walks} --
 The duality relationship between the voter model and coalescing random walks results from keeping track of the ancestors of
 different space-time points going backwards in time through the graphical representation.
 In the case of the public debate model $\eta_{\cdot}$, the opinion of an individual just after an interaction depends on the opinion of all
 the individuals in the corresponding discussion group just before the interaction.
 Therefore, to define the set of ancestors of a given space-time point, we draw an arrow
 $$ z_1 \ \to \ z_2 \quad \hbox{at time} \quad t := T_n (z) \quad \hbox{for all} \quad z_1, z_2 \in z + B_s \ \hbox{and} \ (z, n) \in \Z^d \times \N^* $$
 to indicate that the opinion at $(z_2, t)$ depends on the opinion at $(z_1, t-)$, and say that there is a $\eta$-path connecting
 two space-time points, which we write
 $$ (y, t - s) \to_{\eta} (x, t) \quad \hbox{for} \quad x, y \in \Z^d \ \hbox{and} \ s, t > 0, $$
 whenever there are sequences of times and spatial locations
 $$ t - s < s_1 < s_2 < \cdots < s_{n - 1} < t \qquad \hbox{and} \qquad z_1 := y, z_2, \ldots, z_n := x \in \Z^d $$
 such that there is an arrow
 $$ z_j \ \to \ z_{j + 1} \ \ \hbox{at time} \ \ s_j \ \ \hbox{for} \ \ j = 1, 2, \ldots, n - 1. $$
 The set of ancestors of $(x, t)$ at time $t - s$ is then encoded in the set-valued process
\begin{equation}
\label{eq:dual-eta}
 \hat \eta_s (x, t) \ := \ \{y \in \Z^d : \hbox{$(y, t - s) \to_{\eta} (x, t)$} \}.
\end{equation}
 Note that the opinion at $(x, t)$ can be deduced from the graphical representation of $\eta_{\cdot}$ and the initial opinion at sites
 that belong to $\hat \eta_t (x, t)$.
 Note also that the process \eqref{eq:dual-eta} grows linearly going backwards in time, i.e., increasing $s$.
 See the left-hand side of Figure \ref{fig:dual} for a picture.
 This makes the process $\eta_{\cdot}$ mathematically intractable to prove clustering and coexistence.
 To establish the connection between the spatial process and coalescing random walks, we use instead the other, mathematically equivalent,
 version $\xi_{\cdot}$ of the spatial public debate model.
 For this version, the opinion of an individual just after an interaction depends on the opinion of only one individual in the corresponding discussion
 group just before the interaction.
 The location of this individual is given by the value of the uniform $W$ random variables.
 Therefore, to define the set of ancestors of a given space-time point, we now draw an arrow
 $$ W_n (z) \ \to \ z' \quad \hbox{at time} \quad t := T_n (z) \quad \hbox{for all} \quad z' \in z + B_s \ \hbox{and} \ (z, n) \in \Z^d \times \N^* $$
 to indicate that the opinion at $(z', t)$ depends on the opinion at $(W_n (z), t-)$.
 We then define $\xi$-paths, which we now write $\to_{\xi}$, as previously but using this new random set of arrows.
 The set of ancestors of $(x, t)$ at time $t - s$ is now encoded in
\begin{equation}
\label{eq:dual-xi}
  \hat \xi_s (x, t) \ := \ \{y \in \Z^d : \hbox{$(y, t - s) \to_{\xi} (x, t)$} \}.
\end{equation}
 More generally, for $A \subset \Z^d$ finite, we define the dual process starting at $(A, t)$ as
 $$ \begin{array}{rcl}
    \hat \xi_s (A, t) & := & \{y \in \Z^d : y \in \hat \xi_s (x, t) \ \hbox{for some} \ x \in A \} \vspace*{4pt} \\
                      & := & \{y \in \Z^d : (y, t - s) \to_{\xi} (x, t) \ \hbox{for some} \ x \in A \}. \end{array} $$
 See the right-hand side of Figure \ref{fig:dual} for a picture.
 Note that \eqref{eq:dual-xi} is reduced to a singleton for all times $s \in (0, t)$ and that we have the duality relationship
\begin{equation}
\label{eq:duality}
  \xi_t (x) \ = \ \xi_{t - s} (Z_s (x)) \ = \ \xi_0 (Z_t (x))\quad \hbox{for all} \quad s \in (0, t)
\end{equation}
 where $Z_s (x) := \hat \xi_s (x, t)$.
 In the next lemma, we prove that $Z_s (x)$ is a symmetric random walk, which makes the dual process itself a system of coalescing symmetric random
 walks with one walk starting from each site in the finite set $A$.
\begin{lemma} --
\label{lem:lineage}
 The process $Z_s (x) := \hat \xi_s (x, t)$ is a symmetric random walk.
\end{lemma}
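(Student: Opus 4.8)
The plan is to show that the backward lineage $Z_s(x) := \hat\xi_s(x,t)$ is a symmetric random walk by analyzing how the singleton set of ancestors moves as we trace it backwards through the graphical representation. First I would recall that, by construction of the $\xi$-process, at each update time $t := T_n(z)$ of a discussion group $z + B_s$, the opinion at every site $z' \in z + B_s$ just after the update equals the opinion at the single site $W_n(z)$ just before the update. Consequently, in the dual arrow system, every target $z' \in z + B_s$ has exactly one incoming arrow, emanating from $W_n(z)$. This immediately gives the claim already asserted in the excerpt that $\hat\xi_s(x,t)$ remains a singleton for all $s \in (0,t)$: the set of ancestors of a single space-time point can neither split nor grow, since tracing backwards one always follows the unique incoming arrow.

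Having established that $Z_s(x)$ is singleton-valued, I would describe its jump dynamics. Starting from $Z_0(x) = x$, the lineage stays put until we encounter (going backwards in time) an update time $t := T_n(z)$ of some group $z + B_s$ that contains the current location of the walk. Since the Poisson processes attached to the various translates $z + B_s$ are independent and of rate one, and since a given site $y$ belongs to exactly $s^d$ of the discussion groups (namely $y - B_s$ intersected appropriately, i.e. the groups $z + B_s$ with $z \in y - B_s$), the walk located at $y$ experiences updates at total rate $s^d$. At such an update of group $z + B_s$ containing $y$, the lineage jumps from $y$ to $W_n(z)$, which is uniform over $z + B_s$. The key point is that this transition mechanism is translation invariant: the rate and the law of the displacement $W_n(z) - y$ depend only on the relative position of $y$ within the group, and averaging over which of the $s^d$ groups fired preserves the spatial homogeneity.

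The main obstacle, and the heart of the lemma, is verifying the \emph{symmetry} of the resulting random walk, i.e.\ that the displacement $Z_{s'}(x) - Z_s(x)$ has a law invariant under $v \mapsto -v$. For this I would compute the one-step displacement distribution explicitly. Conditioned on the walk sitting at $y$, the next jump is to $W_n(z)$ where $z$ is chosen uniformly among the $s^d$ groups containing $y$ (each fires at the same rate), and $W_n(z)$ is then uniform over the $s^d$ sites of $z + B_s$. Writing $y = z + b$ with $b \in B_s$ and the target as $z + b'$ with $b' \in B_s$, the displacement is $b' - b$; summing uniformly over $b \in B_s$ (the choice of group) and uniformly over $b' \in B_s$ (the value of $W$) shows the displacement density at $v$ equals $s^{-2d}\,\card\{(b,b') \in B_s \times B_s : b' - b = v\}$. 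Since this counting function is manifestly even in $v$ (swapping the roles of $b$ and $b'$ sends $v$ to $-v$ while preserving the count), the displacement distribution is symmetric, completing the proof that $Z_s(x)$ is a symmetric random walk.
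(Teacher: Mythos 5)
Your proposal is correct and takes essentially the same route as the paper: both read off from the dual graphical representation that the lineage jumps, at each update of a group containing it, to a uniform site of that group, and both compute the displacement rate as $s^{-d}$ times the number of boxes $z + B_s$ containing the current and target sites, then exhibit a bijection proving this count is even in the displacement. The only cosmetic difference is the choice of bijection (your swap of $(b,b')$ versus the paper's translation $z \mapsto z - w$ of the group index), and your explicit derivations of the singleton property and the total jump rate $s^d$ fill in details the paper states without proof.
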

\begin{proof}
 By construction of the dual process, for $t - s := T_n (z)$,
 $$ \begin{array}{rclcl}
     Z_s (x) \ := \ \hat \xi_s (x, t) & = & Z_{s-} (x) & \hbox{when} & Z_{s-} (x) \notin z + B_s  \vspace*{4pt} \\
                                      & = &  W_n (z)   & \hbox{when} & Z_{s-} (x) \in    z + B_s. \end{array} $$
 Since in addition discussion groups are updated at rate one and
 $$ P \,(W_n (z) = y) = s^{-d} \quad \hbox{for all} \quad y \in z + B_s $$
 we obtain the following transition rates:
\begin{equation}
\label{eq:lineage-1}
 \begin{array}{l}
  \lim_{\,h \to 0} \ h^{-1} \,P \,(Z_{s + h} (x) = y + w \ | \ Z_s (x) = y) \vspace*{4pt} \\ \hspace*{25pt} = \
   s^{-d} \ \card \{z \in \Z^d : y \in z + B_s \ \hbox{and} \ y + w \in z + B_s \}. \end{array}
\end{equation}
 In addition, since for all $w \in \Z^d$ the translation operator $y \mapsto y + w$ is a one-to-one correspondence from the set
 of discussion groups to itself and since
 $$ y, y + w \in z + B_s \quad \hbox{if and only if} \quad y - w, y \in (z - w) + B_s $$
 we have the equality
\begin{equation}
\label{eq:lineage-2}
 \begin{array}{l}
  \card \{z \in \Z^d : y \in z + B_s \ \hbox{and} \ y + w \in z + B_s \} \vspace*{4pt} \\ \hspace*{25pt} = \
  \card \{z \in \Z^d : y \in z + B_s \ \hbox{and} \ y - w \in z + B_s \}. \end{array}
\end{equation}
 Combining \eqref{eq:lineage-1}--\eqref{eq:lineage-2}, we conclude that
 $$ \begin{array}{l}
    \lim_{\,h \to 0} \ h^{-1} \,P \,(Z_{s + h} (x) = y + w \ | \ Z_s (x) = y) \vspace*{4pt} \\ \hspace*{25pt} = \
    \lim_{\,h \to 0} \ h^{-1} \,P \,(Z_{s + h} (x) = y - w \ | \ Z_s (x) = y) \end{array} $$
 for all $y, w \in \Z^d$, which completes the proof.
\end{proof} \\ \\
 In fact, some basic geometry shows that
 $$ \begin{array}{l}
    \lim_{\,h \to 0} \ h^{-1} \,P \,(Z_{s + h} (x) = y + w \ | \ Z_s (x) = y) \ = \ s^{-d} \ \displaystyle \prod_{j = 1}^d \ (s - |w_j|) \end{array} $$
 where $w_j$ is the $j$th coordinate of the vector $w$.
 With Lemma \ref{lem:equivalent}, which shows that $\xi_{\cdot}$ is indeed the spatial public debate model, and the previous lemma
 in hands, the rest of the proof of the theorem follows the lines of the corresponding result for the voter
 model \cite{clifford_sudbury_1973, holley_liggett_1975}.
 Since it is short, we briefly recall the main ideas in the next two lemmas that deal with the clustering part and the coexistence part of the
 theorem, respectively.
\begin{lemma} --
\label{lem:clustering}
 Assume that $d \leq 2$. Then,
 $$ \begin{array}{l} \lim_{\,t \to \infty} \ P \,(\xi_t (x) \neq \xi_t (y)) \ = \ 0 \quad \hbox{for all} \ x, y \in \Z^d. \end{array} $$
\end{lemma}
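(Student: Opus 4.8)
The plan is to exploit the duality with coalescing random walks established above. By the duality relationship \eqref{eq:duality} applied to $A = \{x, y\}$, we have $\xi_t (x) = \xi_0 (Z_t (x))$ and $\xi_t (y) = \xi_0 (Z_t (y))$, where $Z_s (x) := \hat \xi_s (x, t)$ and $Z_s (y) := \hat \xi_s (y, t)$ are, by Lemma \ref{lem:lineage}, two symmetric random walks that coalesce when they meet. The first observation is that if the two dual walks have coalesced by time $t$, i.e. $Z_t (x) = Z_t (y)$, then $\xi_t (x) = \xi_0 (Z_t (x)) = \xi_0 (Z_t (y)) = \xi_t (y)$. Writing $\tau := \inf \{s : Z_s (x) = Z_s (y) \}$ for the coalescence time, this gives the key bound
$$ P \,(\xi_t (x) \neq \xi_t (y)) \ \leq \ P \,(\tau > t). $$
It therefore suffices to prove that $\tau < \infty$ almost surely, for then $P \,(\tau > t) \to 0$ as $t \to \infty$.

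To control $\tau$, I would look at the difference $D_s := Z_s (x) - Z_s (y)$. As long as the two walks have not coalesced they evolve through the graphical representation, and the increments of $D_s$ are symmetric since each walk has the symmetric jump kernel of Lemma \ref{lem:lineage}; hence $D_s$ is itself a continuous-time symmetric random walk on $\Z^d$ run until it is absorbed at the origin. Because the jumps lie in $\{-(s - 1), \ldots, s - 1 \}^d$, the walk $Z$ has bounded increments and, for $s \geq 2$, positive jump rate in each of the coordinate directions; hence $D_s$ has finite second moments and its increments generate $\Z^d$, making it a genuinely $d$-dimensional symmetric random walk. By the Chung--Fuchs recurrence criterion, such a walk is recurrent precisely when $d \leq 2$, so $D_s$ returns to the origin almost surely. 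Since coalescence occurs no later than the first hitting time of the origin by $D_s$, and in fact can occur earlier, when the two walks first share an updated discussion group, we conclude that $\tau < \infty$ almost surely when $d \leq 2$.

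Combining the two facts yields $\lim_{t \to \infty} P \,(\xi_t (x) \neq \xi_t (y)) \leq \lim_{t \to \infty} P \,(\tau > t) = 0$, which is the claim. The routine parts are the duality bookkeeping and the verification that $D_s$ is genuinely $d$-dimensional; the one step requiring a little care is the recurrence input, namely confirming that the difference of two independent copies of the walk from Lemma \ref{lem:lineage} satisfies the hypotheses of the Chung--Fuchs theorem, so that recurrence holds in $d \leq 2$. I expect this to be the main, though standard, obstacle, exactly as in the classical voter model argument \cite{clifford_sudbury_1973, holley_liggett_1975}.
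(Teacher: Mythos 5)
Your proposal is correct and follows essentially the same route as the paper: both use the duality relationship \eqref{eq:duality} to bound $P\,(\xi_t(x) \neq \xi_t(y))$ by the probability that the two dual walks have not coalesced, then observe that the difference $Z_s(x) - Z_s(y)$ is a symmetric finite-range random walk absorbed at the origin, recurrent in $d \leq 2$. Your explicit appeal to Chung--Fuchs simply makes precise the recurrence claim the paper states without citation, and your remark about coalescence via a shared group update is harmless since that event is exactly the first hitting of the origin by the difference walk.
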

\begin{proof}
 Since $Z_s (x)$ and $Z_s (y)$ evolve according to independent random walks run at rate one until they coalesce, the difference between the random
 walks $Z_s (x) - Z_s (y)$ is a continuous-time symmetric random walk run at rate two absorbed at site zero.
 Since this random walk has in addition a finite range of interactions, it is recurrent in one and two dimensions, hence
 $$ \begin{array}{l} \lim_{\,t \to \infty} P \,(Z_t (x) \neq Z_t (y)) \ = \
                     \lim_{\,t \to \infty} P \,(Z_s (x) - Z_s (y) \neq 0 \ \hbox{for all} \ s < t) \ = \ 0. \end{array} $$
 By the duality relationship \eqref{eq:duality}, we conclude that
 $$ \begin{array}{l} \lim_{\,t \to \infty} P \,(\xi_t (x) \neq \xi_t (y)) \ \leq \ \lim_{\,t \to \infty} P \,(Z_t (x) \neq Z_t (y)) \ = \ 0. \end{array} $$
 This completes the proof.
\end{proof}
\begin{lemma} --
\label{lem:coexistence}
 Assume that $d \geq 3$.
 Then, $\xi_t$ converges in distribution to an invariant measure in which there is a positive density of both opinions.
\end{lemma}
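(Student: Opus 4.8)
The plan is to follow the classical analysis of the voter model, exploiting the duality between $\xi_{\cdot}$ and the system of coalescing symmetric random walks established in Lemma~\ref{lem:lineage} together with the duality relationship \eqref{eq:duality}. Per the hypothesis of Theorem~\ref{th:spatial-debate}, the initial configuration is the product measure $\nu_\theta$ in which each site is independently $+1$ with probability $\theta \in (0,1)$, so I would start from $\nu_\theta$ and use duality to express the finite-dimensional distributions of $\xi_t$ in terms of the dual. For a finite set $A \subset \Z^d$, relation \eqref{eq:duality} gives $\xi_t (x) = \xi_0 (Z_t (x))$ for each $x \in A$, so the sites in $A$ all carry opinion $+1$ at time $t$ precisely when every ancestor in $\hat \xi_t (A, t)$ is initially $+1$. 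Conditioning on the dual and using that distinct sites are independently $+1$ under $\nu_\theta$, this yields
$$ P_{\nu_\theta} (\xi_t (x) = +1 \ \hbox{for all} \ x \in A) \ = \ E \big(\theta^{\,\card \hat \xi_t (A, t)} \big). $$

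Next I would observe that the coalescing system $\{Z_s (x) : x \in A\}$ loses particles only through coalescence, so $\card \hat \xi_t (A, t)$ is nonincreasing in the running time $t$ and converges almost surely to a limit $\card \hat \xi_\infty (A) \geq 1$. Bounded convergence then shows that the right-hand side above converges as $t \to \infty$, and since the cylinder events $\{\xi \equiv +1 \ \hbox{on} \ A\}$, ranging over finite $A$, determine a probability measure by inclusion--exclusion, this identifies a limiting measure $\mu_\theta$ with $\xi_t \Rightarrow \mu_\theta$ when started from $\nu_\theta$. Invariance of $\mu_\theta$ follows from the semigroup identity $\mu_\theta S (r) = \lim_t \nu_\theta S (t + r) = \mu_\theta$.

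The heart of the matter, and where the hypothesis $d \geq 3$ enters, is to show that $\mu_\theta$ genuinely exhibits coexistence. Here I would use that the difference $Z_s (x) - Z_s (y)$ of two dual lineages is, by Lemma~\ref{lem:lineage}, a symmetric finite-range random walk run at rate two and absorbed at the origin; in contrast with the recurrent case treated in Lemma~\ref{lem:clustering}, this walk is transient for $d \geq 3$, so two lineages coalesce only with some probability $p_c < 1$. Taking $A = \{x, y\}$, the limiting number of ancestors is either $1$ (coalescence) or $2$ (no coalescence), and a short computation gives, under $\mu_\theta$,
$$ P \,(\xi (x) \neq \xi (y)) \ = \ 2 \,\theta \,(1 - \theta)(1 - p_c) \ > \ 0, $$
while $A = \{x\}$, for which a single lineage always consists of one particle, gives $P \,(\xi (x) = +1) = \theta \in (0, 1)$. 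Hence both opinions have positive density and the system does not cluster.

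The main obstacle I anticipate is the transience step: one must verify that the difference of two lineages is indeed transient in $d \geq 3$. This requires knowing its increment law, which is supplied by the explicit jump rates $s^{-d} \prod_{j = 1}^d (s - |w_j|)$ recorded after Lemma~\ref{lem:lineage}; these show that the walk is genuinely $d$-dimensional and of finite range, hence transient by the standard potential-theoretic criterion. A secondary point requiring care is justifying that convergence of all cylinder probabilities upgrades to weak convergence and that the resulting $\mu_\theta$ is translation invariant with the stated positive densities. Both are routine once the dual computation and the transience estimate are in place, and together with Lemma~\ref{lem:equivalent}, which guarantees that $\xi_{\cdot}$ is indeed the spatial public debate model, they complete the argument.
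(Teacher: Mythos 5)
Your proposal is correct and follows essentially the same route as the paper: duality expresses cylinder probabilities as $E\,(\theta^{\card \hat \xi_t (A, t)})$ (the paper uses the complementary event and $(1 - \theta)^{|\hat \xi_t (A, t)|}$), convergence of these quantities yields the invariant limit, and transience of the difference of two dual lineages in $d \geq 3$ gives $\lim_{t \to \infty} P\,(\xi_t (x) \neq \xi_t (y)) = 2\,\theta\,(1 - \theta)\,(1 - p_c) > 0$, exactly as in the paper. Your added details (monotonicity of the lineage count, inclusion--exclusion to upgrade to weak convergence, the semigroup identity for invariance, and the single-site marginal $\theta$) are correct refinements of steps the paper treats briefly, not a different argument.
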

\begin{proof}
 To prove convergence to a stationary distribution, we first observe that there is no +1 individual in the set $A$ at
 time $t$ if and only if there is no +1 individual in the corresponding dual process at time 0.
 In particular, identifying $\xi_t$ with the set of +1 individuals, we get
\begin{equation}
\label{eq:coexistence}
 P \,(\xi_t \cap A = \varnothing) \ = \ E \,\Big((1 - \theta)^{|\hat \xi_t (A, t)|} \Big).
\end{equation}
 The dominated convergence theorem implies that both terms in \eqref{eq:coexistence} have a limit as time goes to infinity, which proves
 the existence of a stationary distribution.
 Moreover, using again the duality relationship \eqref{eq:duality} and the fact that symmetric simple random walks are transient in three or higher
 dimensions, we obtain the positivity of the limit
 $$ \begin{array}{rcl}
    \lim_{\,t \to \infty} \ P \,(\xi_t (x) \neq \xi_t (y)) & = &
    \lim_{\,t \to \infty} \ P \,(\xi_0 (Z_t (x)) \neq \xi_0 (Z_t (y))) \vspace*{6pt} \\ & = &
    \lim_{\,t \to \infty} \ 2 \theta (1 - \theta) \ P \,(Z_t (x) \neq Z_t (y)) \ > \ 0. \end{array} $$
 This shows that the spatial public debate model converges to a stationary distribution in which the density of +1 individuals and the
 density of $-1$ individuals are both positive.
\end{proof}


\end{document}